\DeclareMathOperator{\Id}{Id}
\DeclareMathOperator{\dist}{dist}
\DeclareMathOperator*{\esssup}{ess\,sup}
\DeclareMathOperator*{\essinf}{ess\,inf}
\DeclareMathOperator{\Var}{Var}
\DeclareMathOperator{\rk}{rk}
\newtheorem{theorem}{Theorem}[section]
\newtheorem*{theorem-non1}{Theorem 1.1}
\newtheorem*{theorem-nonA}{Theorem A}
\newtheorem*{theorem-nonB}{Theorem B}
\newtheorem*{theorem-nonC}{Theorem C}
\newtheorem{lemma}[theorem]{Lemma}
\newtheorem{proposition}[theorem]{Proposition}
\newtheorem{conjecture}[theorem]{Conjecture}
\theoremstyle{definition}
\newtheorem{definition}[theorem]{Definition}
\newtheorem{assumption}[theorem]{Assumption}
\newtheorem{definition and lemma}[theorem]{Definition and Lemma}
\newcommand{\rmd}{\mathrm{d}}
\newcommand{\rmD}{\mathrm{D}}
\renewcommand{\epsilon}{\varepsilon}
\newcommand{\fa}          {\quad \text{for all } \,}
\newcommand{\R} {\mathbb R}
\numberwithin{equation}{section}
\begin{document}

\title{\vspace*{-10mm}
Conditioned Lyapunov exponents for random dynamical systems}
\author{Maximilian Engel\thanks{Zentrum Mathematik der TU M\"{u}nchen,
Boltzmannstr. 3, D-85748 Garching bei M\"{u}nchen} \and Jeroen S.W.~Lamb\thanks{Department of Mathematics, Imperial College London, 180 Queen’s Gate, London SW7 2AZ, United Kingdom
} \and Martin~Rasmussen\footnotemark[2]}
\date{\today}
\maketitle

\bigskip

\begin{abstract}
We introduce the notion of Lyapunov exponents for random dynamical systems, conditioned to
trajectories that stay within a bounded domain for asymptotically long times. This is motivated by
the desire to characterize local dynamical properties in the presence of unbounded noise (when almost all trajectories are unbounded).
We illustrate its use in the analysis of local bifurcations in this context.

The theory of conditioned Lyapunov exponents of stochastic differential equations builds on the stochastic analysis of quasi-stationary distributions for killed processes and associated quasi-ergodic distributions. We show that conditioned Lyapunov exponents describe the local stability behaviour of trajectories that remain within a bounded domain and -- in particular -- that negative conditioned Lyapunov exponents imply local synchronisation. Furthermore, a conditioned dichotomy spectrum is introduced and its main characteristics are established.

\bigskip

\noindent $\textit{Keywords}$. Quasi-stationary distribution, Quasi-ergodic distribution, Lyapunov exponent, Random dynamical system, Stochastic bifurcation.

\noindent $\textit{Mathematics Subject Classification (2010)}$.
37A50, 37H10, 37H15, 60F99.

\end{abstract}

\section{Introduction} \label{intro}
Lyapunov exponents are central to the theory of dynamical systems, providing a quantitative measure of local instability that underlies the celebrated sensitive dependence on initial conditions typifying deterministic chaos. Increasingly, mathematical modelling concerns the evaluation of stochastic differential equations consisting of a deterministic (nonlinear) drift and an unbounded white noise diffusion. In this stochastic setting, the theory of Lyapunov exponents remains well established, but it characterizes only global dynamical properties as the unbounded noise enforces a unique ergodic component that is equal to the entire phase space: under a perturbation by unbounded noise, all attractors and other flow-invariant objects of a deterministic dynamical system are joined together. This is one of the main reasons why it has been difficult to extend the bifurcation theory for deterministic dynamical systems, describing qualitative changes in dynamical behaviour, to the stochastic context with unbounded noise, since the local building blocks of bifurcation theory cease to exist.\footnote{We note that random dynamical systems with bounded noise suffer less from this problem, but such systems are generally not amenable to techniques from stochastic analysis and we will not consider these here. We refer to \cite{bhy12,hyg2013,Lamb_15_1,Zmarrou_07_1} for alternative approaches to bifurcations of random dynamical systems in the bounded noise context.}


The objective of this paper is to develop a notion of Lyapunov exponents in the context of killed processes. A Markov process is said to be killed if its domain contains certain traps from which the process cannot escape (at which point the process is considered to be killed). The aim is to study the Markov process under the condition that it survives for asymptotically long times. It is normally assumed in this context that trajectories of the killed Markov process hit a trap almost surely in finite time. In this paper, we consider the Markov process to be induced by an SDE 
with additive unbounded white noise on a bounded domain $E \subset \mathbb{R}^d$:
\begin{equation} \label{SDErd_intro}
\rmd X_t = f(X_t) \, \rmd t + \sigma \, \rmd W_t\,, \ X_0 = x \in E\,,
\end{equation}
where the drift $f$ is continuously differentiable. We consider trajectories of the SDE starting inside the interior of a bounded subset $E$ of $\mathbb{R}^d$ conditioned on the fact that they do not reach the boundary $\partial E$. In other words, the above mentioned trap is constituted by $\partial E$.

The theory of conditioned processes goes back to the pioneering work of Yaglom in 1947 \cite{yag47}, but in recent years, new ideas have been developed (see \cite{cmm13, mv12} for recent surveys). Due to the loss of mass by absorption at the boundary, the existence of a stationary distribution is impossible and, therefore, stationarity is replaced by quasi-stationarity. A \textit{quasi-stationary distribution} preserves mass along the process conditioned on survival. Analogously, ergodicity is a potentially problematic concept in this context. Nonetheless, given a unique quasi-stationary distribution for a Markov process $(X_t)_{t \geq 0}$ on a state space $E$, one can derive the existence of a \textit{quasi-ergodic distribution} $m$ \cite{bro99}, characterized by
\begin{equation*}
\lim_{t \to \infty} \mathbb{E}_{x} \left( \frac{1}{t} \int_{0}^t f(X_s) \, \rmd s \bigg| T>t \right) = \int_{E} f \, \rmd m
\end{equation*}
for all measurable and bounded observables $f$ and initial points $x$. Here the time of absorbtion at the boundary of $E$ is denoted by $T$.

Building on recent results by Villemonais, Champagnat, He and others \cite{CV16, ccv16, hzz16}, we obtain as our main result the existence of a \textit{conditioned Lyapunov exponent}:
\begin{theorem-nonA}
Let $(\theta, \varphi)$ be the random dynamical system with absorption at the boundary corresponding to the Markov process $(X_t)_{t \geq 0}$ solving equation~\eqref{SDErd_intro}. If $(X_t)_{t \geq 0}$ and the projection of its derivative process possesses a joint quasi-ergodic distribution, then for all $v \in \mathbb{R}^d \setminus \{0\}$ and $x \in E$, the conditioned expectation of the finite-time Lyapunov exponents converges to the so-called conditioned Lyapunov exponent  $\lambda$:
\begin{equation*}
\lambda = \lim_{t \to \infty} \frac{1}{t} \mathbb{E}_x \left[ \ln \frac{\| \rmD \varphi(t,\cdot,x) v \|}{\|v\|} \bigg|  T > t \right]\,.
\end{equation*}
If $d=1$, the domain $E$ is an interval $E=I \subset \mathbb{R}$, and the conditioned Lyapunov exponent $\lambda$ is given by
\begin{equation} \label{lambda_onedim_intro}
\lambda = \int_I f'(y) \, m(\rmd y)\,,
\end{equation}
where $m$ denotes the quasi-ergodic distribution for~\eqref{SDErd_intro}.
\end{theorem-nonA}
The proof (see Theorem~\ref{fk_killed}) relies on the fact that the finite-time Lyapunov exponents can be expressed as the time averages of a functional. These time averages conditioned on survival converge to an integral with respect to the relevant quasi-ergodic distribution. Existence of the quasi-ergodic distribution follows from standard theory \cite{ccv16, cmm13, hzz16} when $d=1$ and from an assumption similar to \cite[Assumption (A')]{CV16} when $d \geq 2$.

We furthermore find that the conditioned Lyapunov exponent $\lambda$ measures the  asymptotic (in)stability of typical surviving trajectories in the sense that the finite-time Lyapunov exponents
$$\lambda_v(t,\cdot,x) := \frac{1}{t}  \ln \frac{\| \rmD \varphi(t,\cdot,x) v \|}{\|v\|}\,.$$
converge in probability to $\lambda$.
%
\begin{theorem-nonB}
Let $\lambda_v(t,\cdot,x)$ denote the finite-time Lyapunov exponents associated with equation~\eqref{SDErd_intro}.
Then for all $\epsilon > 0$, we have
\begin{equation*}
\lim_{t  \to \infty} \mathbb{P}_x \left( \left| \lambda_v(t,\cdot,x) - \lambda \right| \geq \epsilon \big| T > t \right) = 0
\end{equation*}
uniformly over all $x \in E$ and $v \in \mathbb S^{d-1}$.
\end{theorem-nonB}
This result is nontrivial since $\lambda$ is only defined as a limit of expectations. We moreover note that convergence in probability is the strongest result possible as almost sure convergence cannot be achieved in this setting. 

As another indication of its dynamical relevance, we obtain that a negative conditioned Lyapunov exponent $\lambda$ implies local synchronization.
\begin{theorem-nonC}
If $\lambda < 0$, then there is exponentially fast local synchronization of trajectories with arbitrarily high probability, i.e.~for all $0 < \rho < 1$, $x\in E$ and $\lambda_{\epsilon} \in (\lambda,0)$ there exists  an $\alpha_x>0$ such that
$$
\lim_{t \to \infty}
\mathbb{P}_x \left( \tfrac{1}{t} \ln \left\|
\varphi(t,\cdot,x)-\varphi(t,\cdot,y)\right\| \leq \lambda_{\epsilon} \mbox{ for all }y \in B_{\alpha_x}(x)
\big|  T > t \right) > 1 - \rho\,. $$
\end{theorem-nonC}

Finally, in Section~\ref{expdich_sec} we define the notion of a conditioned dichotomy spectrum in the above context and show
that it consists of a finite number $ n\leq d$ 
of closed intervals (Theorem~\ref{Dichtheorem}).
We moreover extend a relation between the dichotomy spectrum and the spectrum of finite-time Lyapunov exponents for SDEs, first obtained in \cite{cdlr16}, to the conditioned setting (Theorem~\ref{boundaryDichotomy}).

Our paper initiates the use of conditioned measures for killed processes to study local properties of random dynamical systems and many interesting problems remain open. For instance, it is unclear whether -- in analogy to the global setting -- one can define a spectrum of conditioned Lyapunov exponents, where the conditioned Lyapunov exponent as defined here corresponds to the maximum. One may further address the existence of other conditioned ergodic quantities, such as metric entropy, and conditioned versions of other facts, such as he correspondence between invariant measures of random dynamical systems and stationary measures of the associated Markov processes. It would also be of interest to compare conditioned dynamical quantities to properties of dynamical systems with bounded noise, for instance in the setting of Hopf bifurcation, cf.~\cite{bhy12,delr17}.


Our results are motivated by our interest in the development of a local bifurcation theory for random dynamical systems with unbounded noise. We conclude this introduction with an example to illustrate the use of our results in this context.

\subsection*{Example: local pitchfork bifurcation with additive noise}

Consider the one-dimensional SDE of the form \eqref{SDErd_intro} with parametrized drift term
\begin{equation}\label{pitchfork_drift}
f_{\alpha}(x) = \alpha x-x^3\,,
\end{equation}
where $\alpha \in \mathbb{R}$.
In the absence of noise, when $\sigma=0$, the resulting ODE is
the usual normal form for a pitchfork bifurcation.
The bifurcation at $\alpha =0$ implies the appearance of a local instability of the equilibrium at the origin
entailing a change of the attractor from $\{0\}$ for $\alpha \leq 0$ to $[-\sqrt{\alpha}, \sqrt{\alpha}]$ for $\alpha >0$.

In the presence of noise, when $\sigma>0$, it was shown in \cite{cf98} that the Lyapunov exponent is always negative.
Consequently there is a unique globally attracting random fixed point for all values of $\alpha$, leading \cite{cf98} to conclude that the noise destroys the pitchfork bifurcation. However, \cite{cdlr16} have shown that more subtle dynamical changes take place when $\alpha$ increases through $0$:  the random attractor loses its uniform attractivity, which is signalled by a zero-level crossing of the dichotomy spectrum.

The negativity of the Lyapunov exponent refers to a global property and relies on the global dominance of contractive properties of the flow.
Figure~\ref{clnum} shows how the conditioned Lyapunov exponent is able to reveal the local instability around the unstable deterministic equilibrium at the origin. In Figure~\ref{change_in_c}, as $c$ increases the contractivity of the dynamics in the deterministic basins of the attractive equilibria is increasingly felt, leading do a decrease in $\lambda$. Similarly, in Figure~\ref{change_in_alpha}, given a fixed neighbourhood of the origin, with increasing $\alpha$, we observe a loss of local stability.

The local approach is also convenient if $f_\alpha$ is on a global level more complicated. For instance, if $f_{\alpha}(x) = \alpha x-x^3+0.3x^5$ the SDE has no global stationary measure and the global Lyapunov exponent is not defined. But by focussing on a local
domain near the origin, say $[-1.5,1.5]$ one finds local dynamical properties well represented by the quasi-ergodic measure and conditioned Lyapunov exponents, similar to when the highest order term is not present. Moreover, in case we add another seventh order term to globally stabilize the dynamics, so that for instance $f_{\alpha}(x) = \alpha x-x^3+0.3x^5-0.1x^7$, a global stationary measure and Lyapunov exponent exist, but due to the presence of unstable equilibria far from the origin, there will be no bifurcation in the dichotomy spectrum when $\alpha$ passes through zero. By conditioning to a local neighbourhood of the origin, e.g.~$[-1.5,1.5]$, one regains the correspondence between a local
loss of stability and loss of attractivity in the conditioned dichotomy spectrum.

\begin{figure}[H]
\centering
\begin{subfigure}{.25\textwidth}
  \centering
  \includegraphics[width=1\linewidth]{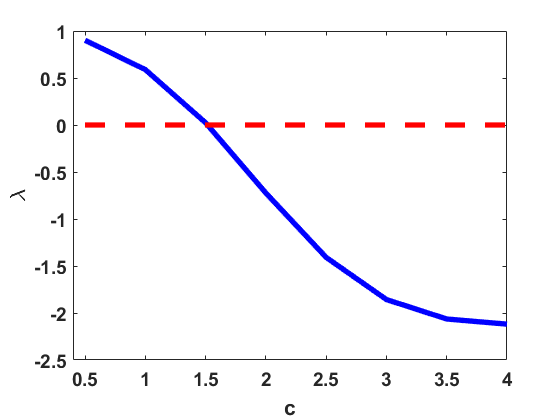}
  \caption{$\lambda(c)$  when $\alpha=1$}
  \label{change_in_c}
\end{subfigure}%
 \hspace*{10em}
\begin{subfigure}{.25\textwidth}
  \centering
  \includegraphics[width=1\linewidth]{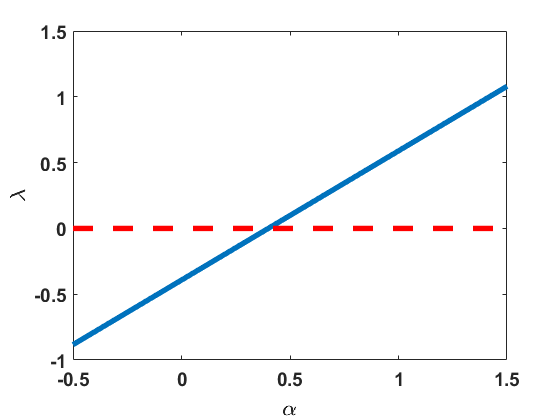}
  \caption{$\lambda(\alpha)$ when $c=1$}
  \label{change_in_alpha}
\end{subfigure}
\caption[ ]{\label{clnum} Conditioned Lyapunov exponent $\lambda$ for SDE \eqref{SDErd_intro} with drift \eqref{pitchfork_drift} and $\sigma=1$ on the domain $[-c,c]$, obtained from a numerical approximation of the quasi-ergodic distribution $m$ in~\eqref{lambda_onedim_intro}.
The dashed red line indicates the zero level of $\lambda$ to fascilitate the observation of its sign change.}
\label{Lyapexp}
\end{figure}



\section{Quasi-stationary and quasi-ergodic distributions} \label{general}

Let $E \subset \mathbb{R}^d$ be an open and bounded connected set with $C^2$-boundary $\partial E$, and consider the stochastic differential equation
\begin{equation} \label{MultidimSDE}
  \rmd X_t = f(X_t) \rmd t + \sigma \rmd W_t\,,
\end{equation}
where $f: E \to \mathbb{R}^d$ is continuously differentiable, $\sigma>0$, and $(W_t)_{t\in\R}$ denotes some $d$-dimensional standard Brownian motion. We assume that $f$ can be extended continuously on $\bar E$.

For an initial condition $X_0 \in E$, we consider the time-homogeneous Markov process $(X_t)_{t \geq 0}$ as solution to \eqref{MultidimSDE}. Let $\mathcal E := \mathcal B (E \cup \partial E)$ be the Borel $\sigma$-algebra. Then the process $(X_t)_{t \geq 0}$ is associated with a family of probabilities $( \mathbb{P}_x)_{x \in E}$ on the Wiener space $(\Omega, \mathcal{F}, \mathbb P)$. We have
$$ \mathbb{P}_x(X_0 = x) =1 \fa x \in E \cup \partial E\,,$$
and the transition probabilities $(\hat P_t)_{t\geq 0}$ are given by
$$
  \hat P_t(x,A) = \mathbb{P}_x (X_t \in A) \fa x \in E \cup \partial E \mbox { and } A \in \mathcal E\,.
$$
The process is further associated with a semi-group of operators $(P_t)_{t\geq 0}$ given by
$$ P_t g(x) = \mathbb{E}_x [g(X_t)]$$
for all measurable and bounded functions $g : E\cup\partial E\to\R$. We consider the Markov process to be killed at $\partial E$, i.e.~$X_s \in \partial E$ for some $s\ge0$ implies $X_t = X_s$ for all $t \geq s$. This means  that the random variable
$$ T := \inf \big\{t \geq 0: X_t \in \partial E \big\}$$
is a stopping time, and we have $X_t = X_T$ for all $t \geq T$.
Note that killed Markov processes induced by \eqref{MultidimSDE} satisfy that for all $x \in E$ and $t\ge 0$, we have
\begin{equation} \label{finitekilling}
T < \infty \quad\mathbb{P}_x\text{-almost surely} \qquad \mbox{and} \qquad \mathbb{P}_{x} (T > t) > 0\,.
\end{equation}

In our context, there are no stationary measures on $E$, since the Markov process $(X_t)_{t\ge0}$ is killed at the boundary. However, quasi-stationary measures often exist.

\begin{definition}[Quasi-stationary distribution]
A \emph{quasi-stationary distribution} (QSD) is a probability measure $\nu$ on $E$ such that for all $ t \geq 0$ and all measurable sets $B \subset E$
\begin{equation} \label{QSDdef}
 \mathbb{P}_{\nu}\left( X_t \in B | T > t \right) = \nu(B)\,.
\end{equation}
\end{definition}
\noindent Here, we use the notation $ \mathbb{P}_{\mu} = \int_{E} \mathbb{P}_x \,  \mu(\rmd x)$ for any probability measure $\mu$ on $E$.

It follows that $T$ is exponentially distributed for a process started with a QSD $\nu$ \cite{cmm13, fkmp95}.
\begin{proposition} \label{exponential_escape}
If $\nu$ is a QSD, then there exists a $\lambda_0 < 0$ such that for all $t \geq 0$,
$$\mathbb{P}_{\nu}(T > t) = e^{\lambda_0 t}\,.$$
We call $\lambda_0$ the (exponential) survival rate and $-\lambda_0$ the \textit{(exponential) escape rate} associated with the quasi-stationary distribution $\nu$.
\end{proposition}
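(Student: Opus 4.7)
The plan is to set $g(t) := \mathbb{P}_{\nu}(T > t)$ and derive a multiplicative functional equation $g(t+s) = g(t)g(s)$ from the QSD property and the Markov property, and then invoke the classical characterisation of measurable solutions of Cauchy's equation to conclude $g(t) = e^{\lambda_0 t}$.

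First I would establish the functional equation. For $s,t\ge 0$, I would write
\begin{equation*}
\mathbb{P}_{\nu}(T > t+s) = \mathbb{P}_{\nu}(T > t)\,\mathbb{P}_{\nu}(T > t+s \mid T > t),
\end{equation*}
which is immediate because $\{T>t+s\}\subset\{T>t\}$. Next, using the strong Markov property at time $t$ together with the fact that $\{T>t+s\}=\{T>t\}\cap\{X_{t+s}\in E\}$ (since $\partial E$ is absorbing), I would rewrite
\begin{equation*}
\mathbb{P}_{\nu}(T > t+s \mid T > t) = \mathbb{E}_{\nu}\bigl[\mathbb{P}_{X_t}(T > s)\,\big|\,T>t\bigr] = \int_E \mathbb{P}_{x}(T>s)\,\mathcal{L}_{\nu}(X_t\mid T>t)(\rmd x).
\end{equation*}
Because $\nu$ is a QSD, the conditional law of $X_t$ given $\{T>t\}$ is precisely $\nu$, so the right-hand side equals $\mathbb{P}_{\nu}(T>s) = g(s)$. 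Thus $g(t+s)=g(t)g(s)$.

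Second, I would show positivity and decay of $g$. From~\eqref{finitekilling}, $\mathbb{P}_x(T>t)>0$ for every $x\in E$, hence $g(t)=\int_E\mathbb{P}_x(T>t)\,\nu(\rmd x)>0$ for all $t\ge 0$; and $T<\infty$ $\mathbb{P}_x$-a.s.\ together with dominated convergence yields $g(t)\to 0$ as $t\to\infty$. The map $t\mapsto g(t)$ is non-increasing (hence measurable), and any non-increasing solution of Cauchy's multiplicative equation with $g(0)=1$ and $g>0$ is of the form $g(t)=e^{\lambda_0 t}$ for some $\lambda_0\in\mathbb{R}$. The condition $g(t)\to 0$ forces $\lambda_0<0$, giving the claimed survival rate.

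The only subtle point is the identification of the conditional law $\mathcal{L}_{\nu}(X_t\mid T>t)$ with $\nu$: this is exactly the QSD property~\eqref{QSDdef} for $B\in\mathcal{E}$ restricted to $E$, so it transfers to expectations of bounded measurable functions by a standard monotone class argument. Once that is in place, the rest is routine. I expect no serious obstacle beyond being careful that the process is stopped at $\partial E$, so that $\{T>t+s\}=\{T>t,\,X_{t+s}\in E\}$ and the Markov property can be applied cleanly at time $t$ on the event $\{T>t\}$.
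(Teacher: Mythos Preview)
Your argument is correct and is in fact the standard proof of this classical result. Note that the paper does not supply its own proof of this proposition: it simply states the result and cites \cite{cmm13, fkmp95}, so there is nothing to compare against beyond observing that what you wrote is essentially the argument found in those references. One small terminological quibble: you invoke the \emph{strong} Markov property, but since you condition at the deterministic time $t$, the ordinary Markov property suffices.
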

%
A particular class of quasi-stationary distributions are \emph{quasi-limiting distributions}, which satisfy for all bounded and measurable functions $h: E \to \mathbb{R}$ that
\begin{equation} \label{qsd_xconverge}
  \lim_{t \to \infty} \mathbb{E}_{x}[h(X_t)| T > t] = \int_{E} h(y)  \nu(\rmd y) \fa x\in E\,.
\end{equation}
However, as we aim at studying ergodic quantities such as Lyapunov exponents, we are more interested in time averages. This motivates the following definition, similarly to \cite{bro99}.
\begin{definition}[Quasi-ergodic distribution]
A probability measure $m$ on $E$ is called \emph{quasi-ergodic distribution} (QED) if for all $t > 0$, every bounded and measurable function $h: E \to \mathbb{R}$ and every $x \in E$, the following limit exists and satisfies
\begin{equation} \label{QEDdef}
\lim_{t \to \infty} \mathbb{E}_{x} \left[ \frac{1}{t} \int_{0}^t h(X_s) \rmd s \bigg| T>t \right] = \int_{E} h \rmd m\,.
\end{equation}
\end{definition}

Champagnat et al.~\cite{ccv16} can then prove a result which immediately implies the following:
\begin{theorem}[QSD and QED for stochastic differential equations] \label{SDE_CCV}
Let $(X_t)_{t\geq 0}$ be the Markov process induced by the stochastic differential equation~\eqref{MultidimSDE} with initial condition $X_0\in E$.
Then the following statements hold.
\begin{enumerate}[(a)]
\item  There exists a QSD $\nu$ with a survival rate $\lambda_0$, which fulfills for $C>0$, $\gamma >0$ and all probability measures $\mu$ on $E$ that
\begin{equation}\label{qsdexpconv}
  \| \mathbb{P}_{\mu} (X_t \in \cdot | T > t) - \nu(\cdot) \|_{TV} \leq C e^{- \gamma t} \fa t \geq 0 \,,
\end{equation}
where $\| \mathbb{P} - \mathbb{Q} \|_{TV} := \sup_{A \in \mathcal E} \left| \mathbb{P}(A) - \mathbb{Q}(A) \right| $ denotes the total variation distance for probability measures.
\item There exists  a QED $m$ given by
$$ m(\rmd x) = \eta(x) \nu(\rmd x)\,,$$
where
\begin{equation}\label{etadef}
 \eta (x) = \lim_{t \to \infty} \frac{\mathbb{P}_x(T > t)}{\mathbb{P}_{\nu}(T > t)} = \lim_{t \to \infty} e^{-\lambda_0 t} \mathbb{P}_x (T > t)\,
\end{equation}
is a bounded eigenfunction of the generator $\mathcal L$ of the semi-group $(P_t)_{t\ge0}$ for the eigenvalue $\lambda_0$. The convergence to the QED $m$ in \eqref{QEDdef} is uniform over all $x \in E$.
\end{enumerate}
\end{theorem}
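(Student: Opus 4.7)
The plan is to reduce the statement to the abstract framework of Champagnat--Villemonais and \cite{ccv16}: under a uniform Doeblin-type minoration for the killed semigroup, this framework simultaneously delivers a QSD satisfying~\eqref{qsdexpconv}, the bounded eigenfunction $\eta$ characterised by~\eqref{etadef}, and the QED representation $m(\rmd x)=\eta(x)\nu(\rmd x)$. The actual work therefore lies in verifying that the SDE~\eqref{MultidimSDE} on the bounded $C^2$ domain $E$ satisfies the hypotheses of that framework.

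For part (a) I would verify the two conditions of \cite[Assumption (A)]{CV16}: (A1) there exist $t_0,c_1>0$ and a probability measure $\nu_1$ supported on a compact $K\subset E$ such that $\mathbb{P}_x(X_{t_0}\in\cdot,\,T>t_0)\ge c_1\nu_1(\cdot)$ for all $x\in E$, and (A2) $\mathbb{P}_{\nu_1}(T>t)\ge c_2\sup_{x\in E}\mathbb{P}_x(T>t)$ for all $t\ge 0$. Both are standard consequences of parabolic regularity for uniformly elliptic SDEs on a bounded domain with $C^2$ boundary: the Dirichlet heat kernel of~\eqref{MultidimSDE} exists, is jointly continuous in the interior, and admits two-sided Gaussian Aronson--Friedman estimates; this yields (A1) with $\nu_1$ proportional to Lebesgue measure on an interior compact $K$, while (A2) follows by combining (A1) with the strong Markov property as one reaches $K$ from any $x\in E$ with uniformly positive probability before exiting. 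Applying the abstract theorem of \cite{CV16} then produces the QSD $\nu$ satisfying~\eqref{qsdexpconv}, and Proposition~\ref{exponential_escape} supplies the negativity of $\lambda_0$.

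For part (b) I would invoke \cite{ccv16}, which under the same assumptions provides a bounded measurable eigenfunction $\eta:E\to(0,\infty)$ of $\mathcal{L}$ at eigenvalue $\lambda_0$, characterised by the uniform limit~\eqref{etadef}. To derive~\eqref{QEDdef}, fix $0\le s\le t$ and decompose by the Markov property
$$\mathbb{E}_x[h(X_s)\mid T>t]=\frac{\mathbb{E}_x\bigl[h(X_s)\mathbf{1}_{T>s}\,\mathbb{P}_{X_s}(T>t-s)\bigr]}{\mathbb{P}_x(T>t)}.$$
Dividing numerator and denominator by $e^{\lambda_0(t-s)}$ and $e^{\lambda_0 t}$ respectively, and using the uniform convergence in~\eqref{etadef}, the right-hand side tends as $t\to\infty$ to $P^\eta_s h(x):=e^{-\lambda_0 s}P_s(h\eta)(x)/\eta(x)$, the semigroup of the so-called $Q$-process conditioned on non-extinction. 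Known ergodicity of $P^\eta$ with invariant measure $m=\eta\nu$ (again a consequence of~\eqref{qsdexpconv}) gives $\tfrac{1}{t}\int_0^t P^\eta_s h(x)\,\rmd s\to\int_E h\,\rmd m$, and the uniform rate in~\eqref{etadef} justifies interchanging this Ces\`aro limit with the conditional limit above, yielding~\eqref{QEDdef} uniformly in $x$ via Fubini.

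The chief technical obstacle is the Doeblin-type lower bound uniform up to $\partial E$: killing causes small-time survival probabilities to decay arbitrarily fast as $x\to\partial E$, so the minoration must be established for the subprobability measure $\mathbb{P}_x(X_{t_0}\in\cdot,\,T>t_0)$ rather than for the full heat kernel. The $C^2$ regularity of $\partial E$, together with non-degeneracy and boundedness of the coefficients, is essential here as it yields boundary Harnack and Dirichlet-kernel estimates that keep the relevant ratios under control. Once this ingredient is secured, the rest is a direct application of the abstract Champagnat--Villemonais framework.
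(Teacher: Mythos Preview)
Your overall strategy matches the paper's: verify the Champagnat--Villemonais hypotheses for the killed diffusion on the bounded $C^2$ domain (this is the content of \cite{ccv16}), then invoke the abstract machinery of \cite{CV16} for part~(a). For part~(b) the paper simply cites \cite{hzz16}, which packages the combination of \cite{CV16} and \cite{bro99}; your sketch via the $Q$-process semigroup $P^\eta_s h = e^{-\lambda_0 s}P_s(h\eta)/\eta$ is essentially that argument written out, so this part is fine.

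There is, however, a genuine error in your formulation of (A1). You ask for
\[
\mathbb{P}_x(X_{t_0}\in\cdot,\,T>t_0)\ge c_1\,\nu_1(\cdot)\quad\text{for all }x\in E,
\]
i.e.\ a uniform lower bound on the \emph{unconditioned} sub-probability kernel. This is impossible: since $\mathbb{P}_x(T>t_0)\to 0$ as $x\to\partial E$, no positive $c_1$ can work uniformly on $E$. You correctly identify this as ``the chief technical obstacle'' and appeal to boundary Harnack, but boundary Harnack does not salvage the unconditional bound---it is exactly what one uses to obtain the \emph{conditional} minoration
\[
\mathbb{P}_x(X_{t_0}\in\cdot\mid T>t_0)\ge c_1\,\nu_1(\cdot),
\]
which is the actual hypothesis in \cite{CV16}. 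The paper uses an equivalent two-point variant: a family $(\nu_{x_1,x_2})_{x_1,x_2\in E}$ with
\[
\mathbb{P}_{x_i}(X_{t_0}\in\cdot\mid T>t_0)\ge c_1\,\nu_{x_1,x_2}(\cdot)\quad(i=1,2),
\]
together with the matching (A2), and this is precisely the form verified for the SDE in \cite{ccv16}. Once you restate (A1) in conditional form, the heat-kernel and Harnack arguments you outline are the right ingredients and the rest of your proof goes through.
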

\begin{proof}
In \cite{CV16}, three equivalent conditions for exponential convergence to a quasi-stationary distribution are considered. To obtain (a), we use the following condition, which is satisfied for the stochastic differential equation \eqref{MultidimSDE}, as shown in \cite{ccv16}: there exists a family of probability measures $(\nu_{x_1, x_2})_{x_1, x_2 \in E}$ on $E$ such that
\begin{itemize}
\item[(A1)] there exist $t_0, c_1 > 0$ such that for all $x_1, x_2 \in E$,
$$\mathbb{P}_{x_i} (X_{t_0} \in \cdot | T > t_0) \geq c_1 \nu_{x_1, x_2}(\cdot) \fa i=1,2\,,$$
\item[(A2)] and there exists $c_2 > 0$ such that for all $x_1, x_2 \in E$ and $t \geq 0$,
$$\mathbb{P}_{\nu_{x_1, x_2}} (T > t) \geq c_2 \sup_{x \in E} \mathbb{P}_{x} (T > t)\,.$$
\end{itemize}
The existence of the QED $m$ with the characterization of $\eta$ as given in (b) follows from \cite{hzz16}, which combines results from \cite{CV16} and \cite{bro99}.
\end{proof}

\section{Conditioned Lyapunov exponents and synchronization} \label{msde}

In the following, we investigate killed processes from a random dynamical systems perspective. We prove the existence of a Lyapunov exponent corresponding to the quasi-ergodic measure, and we show convergence of finite-time Lyapunov exponents to the Lyapunov exponent.

We consider the stochastic differential equation \eqref{MultidimSDE} and the time-homogeneous Markov process $(X_t)_{t\geq 0}$ for an initial condition $X_0\in E$. Let $(\theta, \varphi)$ be the random dynamical system generated by \eqref{MultidimSDE} (see \cite{a98}). We assume that $\varphi: \mathbb{R}_{0}^+ \times \Omega \times \bar E  \to \bar E$ is globally defined in time, in the sense that it takes a constant value in $\partial E$ if the system is killed at the boundary $\partial E$.
We have
$$
  \mathbb P_x( X_t \in B) = \mathbb{P}(\varphi(t, \cdot, x) \in B) \fa  t \geq 0\,,\, x \in E \mbox{ and } B \in \mathcal B (\bar E)\,.
$$
Define the stopping time $\tilde T: \Omega \times E \to \mathbb{R}_0^+$ as
$$ \tilde T(\omega,x) = \inf \big\{ t > 0\,:\, \hat{\varphi}(t, \omega,x) \in \partial E \big\}\,$$
such that for all $x \in E$ and $t\geq 0$
$$ \mathbb P_x( T > t) = \mathbb{P}( \tilde T(\cdot,x) > t)\,.$$

Note that $\varphi(t, \omega,\cdot)$ is differentiable for all $\omega \in\Omega$, $x\in E$ and $t < \tilde T(\omega,x)$. We consider the finite-time Lyapunov exponents
\begin{displaymath}
  \lambda_{v}(t,\omega,x) = \frac{1}{t}  \ln \frac{\| \rmD \varphi(t,\omega,x) v \|}{\|v\|} \fa t \in \big(0, \tilde T(\omega,x)\big)\,,
\end{displaymath}
where $\rmD \varphi$ solves the variational equation corresponding to \eqref{MultidimSDE} given by
\begin{equation} \label{varequ}
\rmd Y(t,\omega,x) = \rmD f(\varphi(t,\omega,x)) Y(t,\omega,x) \, \rmd t \,,
\end{equation}
where $Y(0,\omega,x) = \Id$.

\subsection{Conditioned Lyapunov exponent}

We now investigate the behavior of finite-time Lyapunov exponents in the limit $t\to\infty$, conditioned to non-absorption at the boundary.
We use Furstenberg--Khasminskii averaging to show that
\begin{equation} \label{lambda_v}
\lambda := \lim_{t \to \infty} \mathbb{E} \big[ \lambda_v (t, \cdot, x)  \big| \tilde T(\cdot,x) > t \big] \fa x\in E \mbox{ and } v\in \R^d\setminus \{0\}
\end{equation}
exists and is independent of $x$ and $v$.

We have seen in the proof of Theorem~\ref{SDE_CCV} that the stochastic process $(X_t)_{t \geq 0}$ generated by \eqref{MultidimSDE} for an initial condition $X_0\in E$ satisfies (A1) and (A2) with a family of probability measures $(\nu_{x_1, x_2})_{x_1, x_2 \in E}$ and $t_0>0$. Instead of $(X_t)_{t\ge0}$, we consider the extended process $(X_t, s_t)_{t \geq 0}$, where
$$s_t(\omega, X_0) = \frac{\rmD \varphi(t,\omega,X_0)}{\| \rmD \varphi(t,\omega,X_0) \|}$$
denotes the induced process on the unit sphere. We need that (A1) and (A2) are satisfied for this process as well, in order to obtain results analogous to Theorem~\ref{SDE_CCV}. Since (A2) is automatically fulfilled, we require the following assumption.

\begin{assumption}\label{assextsys}
There exists a $c_0 > 0$ and a family of probability measures $(\mu_{z_1, z_2})_{z_1,z_2\in  E \times \mathbb S^{d-1}}$ such that for any $z_i =(x_i,s_i) \in E \times \mathbb S^{d-1}$, where $i=1,2$, and $A \in \mathcal B (E)$ with $\nu_{x_1,x_2}(A) > 0$, we have
$$ \mathbb{P}_{z_i} (s_{t_0} \in \cdot\, | X_{t_0} \in A, T > t_0) \geq c_0 \mu_{z_1,z_2} (\cdot)\,.$$
\end{assumption}

We will show below that it follows under Assumption~\ref{assextsys} that process $(X_t, s_t)_{t \geq 0}$ that possesses a joint quasi-ergodic distribution $\tilde{m}$ on $E\times \mathbb S^{d-1}$.
The following theorem is a more detailed version of Theorem A.

\begin{theorem}[Conditioned Lyapunov exponent] \label{fk_killed}
  Consider the process $(X_t, s_t)_{t \geq 0}$ under the assumption that it possesses a joint quasi-ergodic distribution $\tilde{m}$ on $E\times \mathbb S^{d-1}$. Then the \emph{conditioned Lyapunov exponent} $\lambda$ as defined in~\eqref{lambda_v} exists and is given by
  \begin{equation}\label{fkformula_quasi}
  \lambda = \lim_{t \to \infty} \mathbb{E} \big[ \lambda_v (t, \cdot, x)  \big| \tilde T(\cdot,x) > t \big] = \int_{\mathbb S^{d-1} \times E}  \langle s, \rmD f(y) s \rangle \ \tilde{m}(\rmd s, \rmd y)\,,
  \end{equation}
  where the convergence is uniform over all $x \in E$ and $v \in \mathbb{R}^d \setminus \{0\}$. A sufficient condition for existence of the quasi-ergodic distribution $\tilde m$ is given by Assumption~\ref{assextsys}.
\end{theorem}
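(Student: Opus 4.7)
The plan is to reduce the finite-time Lyapunov exponent $\lambda_v(t,\omega,x)$ to a time average of a bounded continuous functional on the joint state space $E\times\mathbb S^{d-1}$ and then apply the joint quasi-ergodic distribution $\tilde m$. Since the noise in \eqref{MultidimSDE} is additive, the variational equation \eqref{varequ} is, for fixed $\omega$, a pathwise linear ODE driven by $\rmD f(\varphi(t,\omega,x))$. Writing $Y_t := \rmD\varphi(t,\omega,x)$ and $s_t := Y_t v/\|Y_t v\|$, a direct differentiation yields the Furstenberg--Khasminskii identity
$$\frac{\rmd}{\rmd t}\ln\|Y_t v\| = \langle s_t, \rmD f(X_t) s_t\rangle \quad\text{for all } t < \tilde T(\omega,x),$$
so that $\lambda_v(t,\omega,x) = \tfrac{1}{t}\int_0^t Q(X_u, s_u)\,\rmd u$ with $Q(y,s) := \langle s, \rmD f(y) s\rangle$. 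Since $\rmD f$ is continuous on $\bar E$, the functional $Q$ is continuous and bounded on $\bar E\times\mathbb S^{d-1}$.

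Substituting this identity into \eqref{lambda_v} and using Fubini on the event $\{\tilde T(\cdot,x) > t\}$ gives
$$\mathbb E\bigl[\lambda_v(t,\cdot,x)\,\big|\,\tilde T(\cdot,x) > t\bigr] = \mathbb E_{(x,v/\|v\|)}\!\left[\frac{1}{t}\int_0^t Q(X_u,s_u)\,\rmd u\,\Big|\, T > t\right].$$
The right-hand side is precisely the expression appearing in the defining identity \eqref{QEDdef} of a quasi-ergodic distribution for the joint process $(X_t,s_t)_{t\ge 0}$ on $E\times\mathbb S^{d-1}$, killed when $X_t$ hits $\partial E$, applied to the observable $h := Q$. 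By hypothesis such a QED $\tilde m$ exists, so the limit equals $\int_{\mathbb S^{d-1}\times E}\langle s, \rmD f(y) s\rangle\,\tilde m(\rmd s,\rmd y)$, independently of the initial data $(x,v/\|v\|)$ and hence of $v\in\R^d\setminus\{0\}$ (the normalisation of $v$ drops out and $Q(y,s) = Q(y,-s)$). Uniformity over $x$ and $v$ is inherited from the uniform QED convergence established in Theorem~\ref{SDE_CCV}(b), applied to the joint process.

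For the sufficient-condition clause, I would verify that $(X_t,s_t)_{t\ge 0}$ satisfies conditions (A1) and (A2) from the proof of Theorem~\ref{SDE_CCV} for some $t_0 > 0$, so that the results of \cite{CV16, ccv16, hzz16} produce the required QSD and QED on $E\times\mathbb S^{d-1}$. Since the killing time $T$ depends only on the $X$-coordinate, (A2) lifts immediately by taking any extension of $\nu_{x_1,x_2}$ to $E\times\mathbb S^{d-1}$. For (A1), given $z_i = (x_i,s_i)$ and a rectangle $A\times B$ with $\nu_{x_1,x_2}(A) > 0$, the factorisation
$$\mathbb P_{z_i}\bigl((X_{t_0},s_{t_0})\in A\times B \,\big|\, T>t_0\bigr) = \mathbb P_{z_i}\bigl(s_{t_0}\in B\,\big|\,X_{t_0}\in A,\,T>t_0\bigr)\cdot\mathbb P_{x_i}\bigl(X_{t_0}\in A\,\big|\,T>t_0\bigr)$$
combined with the original (A1) and Assumption~\ref{assextsys} produces a minorisation by $c_0 c_1\,\nu_{x_1,x_2}(A)\mu_{z_1,z_2}(B)$; a monotone-class extension then yields joint (A1) with $\tilde\nu_{z_1,z_2} := \nu_{x_1,x_2}\otimes\mu_{z_1,z_2}$.

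The main obstacle, as I see it, is not any individual computation but the verification that this joint extension on $E\times\mathbb S^{d-1}$ lies within the scope of the QSD-to-QED transfer of \cite{hzz16}: one must check that the product minorising family $\tilde\nu_{z_1,z_2}$ can simultaneously witness a joint (A2) bound and that the eigenfunction structure of the killed joint semigroup matches the hypotheses used to derive a QED in the first place. Once these regularity prerequisites are in place, the Furstenberg--Khasminskii reduction and the application of $\tilde m$ to $Q$ are essentially formal.
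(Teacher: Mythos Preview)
Your proposal is correct and follows essentially the same route as the paper: the Furstenberg--Khasminskii reduction $\lambda_v(t,\omega,x)=\tfrac1t\int_0^t \langle s_u,\rmD f(X_u)s_u\rangle\,\rmd u$ via the polar decomposition of the variational equation, application of the joint QED $\tilde m$ to the bounded observable $Q$, and verification of (A1)--(A2) for the joint process via the product minorising family $\tilde\nu_{z_1,z_2}=\nu_{x_1,x_2}\otimes\mu_{z_1,z_2}$ and the fact that $T$ depends only on the $X$-coordinate. The paper handles the ``obstacle'' you flag exactly as you outline and does not require any deeper analysis of the eigenfunction structure beyond invoking \cite{CV16, hzz16} once (A1) and (A2) are in place.
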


\begin{proof}
Note that the angular component $s_t$ as defined above lies on the unit sphere $\mathbb S^{d-1}$ and write  $r_t(\omega, x) = \|\rmD \varphi(t,\omega,x) \|$ for the radial component. For fixed $\omega\in\Omega$ and $x\in E$, the variational equation~\eqref{varequ} in vector polar coordinates is given by
\begin{align*}
\rmd s_t &=  \rmD f(\varphi(t,\omega,x)) s_t - \langle s_t, \rmD f(\varphi(t,\omega,x)) s_t \rangle s_t\, \rmd t\,,\\
\rmd r_t &= \langle s_t, \rmD f(\varphi(t,\omega,x))s_t \rangle r_t\, \rmd t\,,
\end{align*}
where $s_0\in\mathbb S^{d-1}$ and $r_0\in\R^+$.
We obtain for all $\omega \in \Omega$ and $x \in E$ that
\begin{equation*}
r_t(\omega,x) = r_0 \exp \left( \int_0^t  h(\varphi(\tau, \omega,x),s_{\tau}(\omega,x) ) \, \rmd \tau \right) \,,
\end{equation*}
where $h: E \times \mathbb S^{d-1} \to \mathbb{R}$ is given by
\begin{equation*}
h(x,s) = \langle s, \rmD f(x)s \rangle\,.
\end{equation*}
We observe that $(X_t, s_t)_{t \geq 0}$ constitutes a skew product system on $E \times \mathbb S^{d-1}$ with killing at $\partial E \times \mathbb S^{d-1}$.
Let us first assume that $(X_t, s_t)_{t \geq 0}$ admit unique QSD $\tilde \nu$ and QED $\tilde m$ on $E \times \mathbb S^{d-1}$, which due to the skew product structure have $\nu$ and $m$ as their marginals on $E$ (see Theorem~\ref{SDE_CCV} for existence of $\nu$ and $m$). Hence, by definition of a quasi-ergodic distribution and the fact that $h$ is bounded and measurable by the assumptions, we conclude that for all $v \in \mathbb{R}^d$ with $v = \|1\|$ (which is enough due to linearity) and $x \in E$
\begin{align*}
\lambda &= \lim_{t \to \infty} \frac{1}{t} \mathbb{E}_{x,s_0}\big[\ln r_t | T > t\big] = \lim_{t \to \infty} \frac{1}{t} \mathbb{E}_x\big[\ln r_t | T > t\big] \\
&=  \lim_{t \to \infty}\frac{1}{t} \mathbb{E} \left[ \int_0^t h(\varphi(\tau, \cdot,x),s_{\tau}(\cdot,x) ) \, \rmd \tau \bigg| \tilde T(\cdot,x) >t \right] = \int h \, \rmd \tilde m\,,
\end{align*}
where the convergence is uniform according to Theorem~\ref{SDE_CCV}~(b).

Let us now assume Assumption~\ref{assextsys}. In order to derive existence of a unique QSD $\tilde \nu$ and associated QED $\tilde m$ for $(X_t, s_t)_{t \geq 0}$ on $E \times \mathbb S^{d-1}$, we need to check that (A1) and (A2) from the proof of Theorem~\ref{SDE_CCV} are satisfied. We know that $(X_t)_{t \geq 0}$ satisfies the assumption on $E \cup \partial E$, i.e.~there is a family $(\nu_{x_1,x_2})$ that fulfils (A1) and (A2) for some constants $t_0, c_1, c_2 > 0$. Assumption~\ref{assextsys} states that there exist a $c_0 > 0$ and a family of probability measures $(\mu_{z_1, z_2})$ such that for any $z_i =(x_i,s_i) \in E \times \mathbb S^{d-1}$, where $i=1,2$, and $A \in \mathcal B (E)$ with $\nu_{x_1,x_2}(A) > 0$
$$ \mathbb{P}_{z_i} (s_{t_0} \in \cdot \,| X_{t_0} \in A, T > t_0) \geq c_0 \mu_{z_1,z_2} (\cdot)\,. $$
We define the family of probability measures
$$ \tilde \nu_{z_1,z_2} (A \times B) = \nu_{x_1,x_2} (A)\mu_{z_1, z_2}(B) \quad \text{ for all measurable } A \subset E,\, B \subset \mathbb S^{d-1},\, z_1, z_2 \in E \times \mathbb S^{d-1}\,.$$
Since $(X_t)_{t \geq 0}$ and therefore $T$ are independent from $(s_t)_{t \geq 0}$, we observe that for all $z_1, z_2 \in E \times \mathbb S^{d-1}$ and measurable $A \subset E$ and $B \subset \mathbb S^{d-1}$, we have
\begin{align*}
\mathbb{P}_{z_i}( (X_{t_0},s_{t_0}) \in A \times B | T > t_0) & = \frac{\mathbb{P}_{z_i}( (X_{t_0},s_{t_0}) \in A \times B, T > t_0)}{\mathbb{P}_{x_i}( T > t_0)} \\
& =  \mathbb{P}_{z_i} (s_{t_0} \in B | X_{t_0} \in A, T > t_0) \mathbb{P}_{x_i}( X_{t_0} \in A | T > t_0) \\
& \geq c_0 \mu_{z_1,z_2} (B)
\mathbb{P}_{x_i}( X_{t_0} \in A | T > t_0) \geq  c_0 c_1 \tilde \nu_{z_1,z_2}(A\times B)\,.
\end{align*}
This shows (A1). Using again the independence of the hitting time $T$ from $s_t$, (A2) follows by observing that for all $z_1, z_2 \in E \times \mathbb S^{d-1}$,
$$ \mathbb{P}_{\nu_{z_1,z_2}}(T>t) = \int_{\mathbb S^{d-1}} \int_E \mathbb{P}_x(T > t) \, \nu_{x_1,x_2}(\rmd x)\, \mu_{z_1, z_2}( \rmd s) \geq c_2 \sup_{x \in E, s \in \mathbb S^{d-1}} \mathbb{P}_{x,s}(T>t)\,.$$
Analogous to the proof of Theorem~\ref{SDE_CCV}, we conclude that there are a unique QSD $\tilde \nu$ and associated QED $\tilde m$ on $E \times \mathbb S^{d-1}$ which due to the skew product structure have $\nu$ and $m$ as their marginals on $E$.
\end{proof}
The Lyapunov exponent from Theorem~\ref{fk_killed} measures dominant behaviour independently from the initial vector $v$ on the tangent space, and therefore does not take into account the dynamics in directions of weaker growth behaviour.
In the classical setting, one obtains a spectrum of Lyapunov exponents associated with a filtration or splitting of flow-invariant subspaces. In our setting, such invariant filtrations or subspaces cannot be obtained due to the finite survival time of a given path. However, it could be possible to find a spectrum of Lyapunov exponents following the Furstenberg--Kesten Theorem \cite[Theorem 3.3.3]{a98}.

For a time $t > 0$, consider the linearization $\Phi(t, \omega,x):= \rmD \varphi(t,\omega, x) \in \mathbb{R}^{d \times d}$ for all $(\omega,x)$ such that $\varphi(s,\omega, x) \in E$ for all $0 \leq s \leq t$. Let
$$0 < \sigma_d(\Phi(t, \omega,x)) \leq \dots \leq \sigma_1(\Phi(t, \omega,x))$$
be the singular values of $\Phi(t, \omega,x)$, i.e.~the eigenvalues of $\sqrt{\Phi^*(t, \omega,x) \Phi(t, \omega,x)}$.

\begin{conjecture}[Lyapunov spectrum] \label{MET_killed}
Let $\Phi$ be the linearization associated with the stochastic differential equation~\eqref{MultidimSDE}.
Then there are $\gamma_i \in \mathbb{R}$ such that for all $x \in E$ we have
$$\frac{1}{t} \,\mathbb{E}\big[ \ln \sigma_i(\Phi(t, \cdot,x)) \big| \tilde T(\cdot,x) > t\big] \xrightarrow{ t \to \infty} \gamma_i \fa 1 \leq i \leq d\,.$$
In this case, we can define a Lyapunov spectrum of growth rates of the surviving trajectories by denoting $\lambda_1 > \lambda_2 > \dots \lambda_p$ for the $p \in\{1,\dots, d\}$ different values of $\gamma_1 \geq \dots \geq \gamma_d$.
\end{conjecture}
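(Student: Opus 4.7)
The plan is to reduce the conjecture to a top‑Lyapunov‑exponent statement for successive exterior‑power cocycles, following the Furstenberg--Kesten route in the conditioned framework of Theorem~\ref{fk_killed}. For $k \in \{1,\dots,d\}$ the $k$-fold exterior power $\Lambda^k \Phi(t,\omega,x)$ is a linear cocycle on $\Lambda^k \mathbb{R}^d$ driven by the natural derivation of $\rmD f$, and its operator norm satisfies the pointwise algebraic identity $\|\Lambda^k \Phi(t,\omega,x)\| = \prod_{i=1}^k \sigma_i(\Phi(t,\omega,x))$. Consequently $\ln \sigma_k = \ln\|\Lambda^k \Phi\| - \ln\|\Lambda^{k-1} \Phi\|$, so it suffices to establish that for each $k$ the conditioned expectation $\tfrac{1}{t} \mathbb{E}\big[\ln\|\Lambda^k \Phi(t,\cdot,x)\| \,\big|\, \tilde T(\cdot,x) > t\big]$ converges to some $\Lambda_k \in \mathbb{R}$ as $t\to\infty$; setting $\gamma_k := \Lambda_k - \Lambda_{k-1}$ (with $\Lambda_0 := 0$) then gives the spectrum. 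The case $k=1$ is Theorem~\ref{fk_killed}.

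To analyse $\Lambda_k$ for $k\ge 2$, I would extend the polar‑decomposition argument of Theorem~\ref{fk_killed} by considering the skew‑product $(X_t, V_t)_{t \geq 0}$ on $E \times G_k(\mathbb{R}^d)$, where $G_k(\mathbb{R}^d)$ is the Grassmannian of $k$-planes and $V_t$ is the $k$-plane obtained by transporting a fixed $V_0 \in G_k(\mathbb{R}^d)$ under $\Lambda^k\Phi(t,\omega,x)$, together with the radial component $\varrho_t = \|\Lambda^k\Phi(t,\omega,x)V_0\|$ (the norm of the corresponding decomposable multivector). A direct polar computation yields the Furstenberg--Khasminskii identity
\begin{equation*}
  \ln \varrho_t - \ln \varrho_0 \;=\; \int_0^t h_k\big(\varphi(\tau,\omega,x), V_\tau\big)\, \rmd \tau,
\end{equation*}
where $h_k(y,V) = \sum_{i=1}^{k} \langle v_i, \rmD f(y) v_i \rangle$ for any orthonormal basis $v_1,\dots,v_k$ of $V$, a continuous bounded function on $\bar E \times G_k(\mathbb{R}^d)$. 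Provided $(X_t, V_t)_{t \geq 0}$ admits a joint quasi‑ergodic distribution $\tilde m_k$ on $E \times G_k(\mathbb{R}^d)$ with marginal $m$ on $E$, the proof of Theorem~\ref{fk_killed} transfers essentially verbatim and yields, uniformly in $x \in E$ and independently of $V_0$,
\begin{equation*}
  \lim_{t \to \infty} \tfrac{1}{t}\, \mathbb{E}\big[\ln\|\Lambda^k\Phi(t,\cdot,x) V_0\| \,\big|\, \tilde T(\cdot,x) > t\big] \;=\; \int_{E \times G_k(\mathbb{R}^d)} h_k \, \rmd \tilde m_k \;=:\; \Lambda_k^{\flat}.
\end{equation*}
Existence of $\tilde m_k$ should follow from an analogue of Assumption~\ref{assextsys} on the Grassmannian, via the frameworks of \cite{CV16, ccv16, hzz16}; hypoellipticity of the induced equation on $G_k(\mathbb{R}^d)$ driven by a nondegenerate Brownian motion makes the required smoothing property plausible in broad generality.

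The main obstacle is then to identify $\Lambda_k^{\flat}$ with the growth rate of the full operator norm $\|\Lambda^k\Phi(t,\cdot,x)\|$, rather than along a fixed $V_0$. In the classical, unconditioned setting this equivalence is furnished by the Oseledets theorem: for $V_0$ outside a proper Oseledets‑invariant subvariety of $G_k(\mathbb{R}^d)$, the growth of $\|\Lambda^k\Phi(t) V_0\|$ coincides with that of the operator norm. No such measurable invariant filtration is available in the conditioned setting, since almost every trajectory is killed in finite time. One plausible route is a Baxendale‑type irreducibility argument adapted to the conditioned cocycle, showing that $\tilde m_k$ cannot concentrate on the subset of $k$-planes with subdominant growth, so that the $V_0$-growth already realises the maximal rate. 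An alternative would be a conditioned analogue of Kingman's subadditive ergodic theorem applied to the subadditive functional $t \mapsto \ln\|\Lambda^k\Phi(t,\omega,x)\|$, bypassing the choice of $V_0$ altogether. Either route appears to require substantial additional input beyond the tools developed in the paper, which is likely why the statement is left as a conjecture. A successful resolution would in particular match $\gamma_1$ with the conditioned Lyapunov exponent $\lambda$ of Theorem~A, giving a natural top of the spectrum.
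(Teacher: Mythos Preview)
The statement is labelled a \emph{Conjecture} in the paper and is explicitly left open: no proof is given. Immediately after stating it, the authors remark that the techniques of Theorem~\ref{fk_killed} do not apply because $\ln \sigma_i(\Phi(t,\cdot,x))$ is not representable as a time-average of a functional of $X_t$, and that the subadditivity route of the classical Furstenberg--Kesten theorem is ``not feasible'' under conditioning to survival. There is therefore no paper proof to compare your proposal against.

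Your proposal is a coherent \emph{strategy} rather than a proof, and you correctly acknowledge this yourself. It goes beyond the paper's brief discussion by making the exterior-power/Grassmannian reformulation explicit and writing down the Furstenberg--Khasminskii functional $h_k$ on $E \times G_k(\mathbb{R}^d)$. Your identification of the main obstacle --- that a quasi-ergodic limit for the growth of $\|\Lambda^k\Phi(t)V_0\|$ along a fixed $V_0$ need not coincide with the growth of the operator norm $\|\Lambda^k\Phi(t)\| = \sup_{V_0}\|\Lambda^k\Phi(t)V_0\|$ --- is precisely the point at which the argument breaks, and it is essentially the same obstruction the paper flags when it says $\ln\sigma_i$ is ``not representable by a functional $h(X_t)$''. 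Your two proposed escape routes (a Baxendale-type irreducibility argument forcing the Grassmannian QED off the subdominant planes, or a conditioned Kingman subadditive ergodic theorem) are natural candidates, but neither is available with the tools developed in the paper; the conditioned subadditive theorem in particular is exactly what the authors hint is missing. So your proposal is consistent with the paper's assessment: the conjecture remains open, and your outline correctly locates where the genuine difficulty lies without resolving it.
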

We cannot use the same techniques as for Theorem~\ref{fk_killed} to prove this conjecture since $\ln \sigma_i(\Phi(t, \cdot,x))$ is not representable by a functional $h(X_t)$.
In the setting without killing, the Furstenberg--Kesten Theorem uses the subadditivity of exterior powers to show convergence of the exponents expressed as singular values. Conditioning to survival makes an analogous approach not feasible. Hence, one has to find a different strategy to show the conjetcure.

Note that the computation of the Lyapunov exponent $\lambda$ from Theorem~\ref{fk_killed} can be very difficult and costly in higher dimensions as we have to determine $\tilde m(\rmd s, \rmd x)$, the joint quasi-ergodic distribution of the original process and the derivative angular process. However, to obtain bounds on the Lyapunov exponent $\lambda$, it may be easier and cheaper to compute or approximate $m(\rmd x)$, the quasi-ergodic distribution of the original process.

In more detail, we define an \emph{upper} and a \emph{lower conditioned Lyapunov exponent} $\lambda^u$ and $\lambda^l$ by
\begin{equation} \label{upperlambda}
\lambda^u := \limsup_{t \to \infty}  \sup_{ \|v\| =1} \frac{1}{t}\, \mathbb{E} \left[ \ln \| \rmD \varphi(t,\cdot,x) v \| \big| \tilde T(\cdot,x) > t \right]\,,
\end{equation}
and
\begin{equation} \label{lowerlambda}
\lambda^l := \liminf_{t \to \infty} \inf_{ \|v\| =1}  \frac{1}{t}\, \mathbb{E} \left[ \ln \| \rmD \varphi(t,\cdot,x) v \| \big| \tilde T(\cdot,x) > t \right]\,.
\end{equation}
Defining
\begin{equation} \label{functionals}
\lambda^{+} (x) = \max_{\|r\|=1} \big\langle \rmD f(x)r,r\big\rangle \quad\mbox{and}\quad \lambda^{-} (x) = \min_{\|r\|=1} \big\langle\rmD f(x)r,r\big\rangle\,,
\end{equation}
we find the following bounds for these quantities.
\begin{proposition} \label{bounds}
Consider the Markov process $(X_t)_{t \geq 0}$ that solves the stochastic differential equation \eqref{MultidimSDE} and is killed at $\partial E$, with quasi-ergodic distribution $m$ and conditioned Lyapunov exponent $\lambda$.
Then the upper and lower conditioned Lyapunov exponents satisfy
\begin{equation*}
\int_{E} \lambda^-(x) m (\rmd x) \leq \lambda^l \leq \lambda \leq \lambda^u  \leq  \int_{E} \lambda^+(x) m (\rmd x)\,.
\end{equation*}
\end{proposition}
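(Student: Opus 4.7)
The plan is to combine the Furstenberg--Khasminskii representation established in the proof of Theorem~\ref{fk_killed} with a pathwise pointwise comparison of the integrand, and then to invoke the quasi-ergodic convergence for the base process $(X_t)_{t\ge 0}$ alone. Recall from that proof that for every $\omega$, $x\in E$, unit vector $v$, and $t<\tilde T(\omega,x)$,
\begin{equation*}
\ln \frac{\|\rmD\varphi(t,\omega,x)v\|}{\|v\|} = \int_0^t h\bigl(\varphi(\tau,\omega,x),s_\tau(\omega,x,v)\bigr)\,\rmd\tau,
\end{equation*}
with $h(x,s)=\langle s,\rmD f(x)s\rangle$ and $s_\tau$ the angular component. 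By the very definition of $\lambda^\pm$ in~\eqref{functionals}, for every $x\in E$ and every $s\in\mathbb S^{d-1}$ we have $\lambda^-(x)\le h(x,s)\le \lambda^+(x)$, so the integrand is sandwiched pathwise by quantities that depend only on $X_\tau=\varphi(\tau,\omega,x)$ and not on the angular direction.

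Next I would integrate in $\tau$, divide by $t$, and take the conditional expectation given $\tilde T(\cdot,x)>t$; because the bounds are independent of $v$, taking $\sup_{\|v\|=1}$ or $\inf_{\|v\|=1}$ afterwards preserves them. This yields, for all $t>0$, $x\in E$ and $\|v\|=1$,
\begin{equation*}
\frac{1}{t}\mathbb{E}\!\left[\int_0^t\lambda^-(\varphi(\tau,\cdot,x))\,\rmd\tau\,\Big|\,\tilde T(\cdot,x)>t\right]\le \frac{1}{t}\mathbb{E}\!\left[\ln\|\rmD\varphi(t,\cdot,x)v\|\,\Big|\,\tilde T(\cdot,x)>t\right]\le \frac{1}{t}\mathbb{E}\!\left[\int_0^t\lambda^+(\varphi(\tau,\cdot,x))\,\rmd\tau\,\Big|\,\tilde T(\cdot,x)>t\right].
\end{equation*}

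Since $f\in C^1$ and $\bar E$ is compact, the maps $\lambda^\pm\colon\bar E\to\mathbb R$ are continuous, hence bounded and measurable. Therefore Theorem~\ref{SDE_CCV}(b) applies with observable $h=\lambda^\pm$, giving
\begin{equation*}
\lim_{t\to\infty}\frac{1}{t}\mathbb{E}\!\left[\int_0^t\lambda^\pm(\varphi(\tau,\cdot,x))\,\rmd\tau\,\Big|\,\tilde T(\cdot,x)>t\right]=\int_E\lambda^\pm(y)\,m(\rmd y),
\end{equation*}
uniformly in $x\in E$. Taking the $\liminf$ with $\inf_{\|v\|=1}$ on the left and the $\limsup$ with $\sup_{\|v\|=1}$ on the right, and using the definitions~\eqref{upperlambda}--\eqref{lowerlambda}, immediately yields $\int_E\lambda^-\,\rmd m\le\lambda^l$ and $\lambda^u\le\int_E\lambda^+\,\rmd m$, while $\lambda^l\le\lambda\le\lambda^u$ follows directly from the existence of the limit in \eqref{fkformula_quasi}.

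The argument is essentially a clean sandwich; the only subtlety worth flagging is that the QED appearing on the outside is the marginal $m$ of the joint QED $\tilde m$ on $E\times\mathbb S^{d-1}$, which matches the QED of $(X_t)_{t\ge 0}$ on $E$ thanks to the skew-product structure noted in the proof of Theorem~\ref{fk_killed}; once this is observed the applicability of Theorem~\ref{SDE_CCV}(b) to $\lambda^\pm$ is unproblematic and no further compactness or integrability verification is needed.
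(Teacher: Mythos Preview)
Your proof is correct and follows essentially the same approach as the paper: both arguments reduce to the pathwise sandwich $\lambda^-(X_\tau)\le \langle s_\tau,\rmD f(X_\tau)s_\tau\rangle\le \lambda^+(X_\tau)$, integrate in time, take conditional expectations, and apply Theorem~\ref{SDE_CCV}(b) to the bounded observables $\lambda^\pm$. The only cosmetic difference is that the paper re-derives the differential inequality for $\|\rmD\varphi(t,\omega,x)v\|^2$ directly, whereas you quote the integral representation already obtained in the proof of Theorem~\ref{fk_killed}; the content is identical.
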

\begin{proof}
For any $\omega \in \Omega$, $x \in E$ and $ 0 \neq v \in \mathbb{R}^d$, define $r_t(\omega,x,v) := \frac{\rmD \varphi(t, \omega, x) v}{\| \rmD \varphi(t, \omega, x) v\|}$. We observe that
\begin{align*}
\frac{\rmd}{\rmd t} \| D  \varphi(t, \omega, x)v \|^2 &= 2 \big\langle D f( \varphi(t,\omega,x)) (t, \omega, x) v, \rmD \varphi(t, \omega, x)v \big\rangle \\
& = 2 \big\langle \rmD f( \varphi(t,\omega,x))r_t(\omega,x,v) , r_t(\omega,x,v) \big\rangle \| \rmD \varphi(t, \omega, x)v \|^2 \\
& \leq 2 \lambda^+( \varphi(t,\omega,x)) \| \rmD \varphi(t, \omega, x)v \|^2.
\end{align*}
Analogously, we obtain
$$ \frac{\rmd}{\rmd t} \| D  \varphi(t, \omega, x)v \|^2  \geq 2 \lambda^-( \varphi(t,\omega,x)) \| \rmD \varphi(t, \omega, x)v \|^2\,.$$
Hence, we can conclude that for all $ 0 \neq v \in \mathbb{R}^d$
\begin{align}
\| \rmD \varphi(t, \omega, x)v \|^2 & \leq \| v \|^2 \exp \left(2 \int_0^t \lambda^+( \varphi(s,\omega,x)) \rmd s \right)\,, \label{upperbound} \\
\| \rmD \varphi(t, \omega, x)v \|^2 & \geq \| v \|^2 \exp \left(2 \int_0^t \lambda^-( \varphi(s,\omega,x)) \rmd s \right)\,. \label{lowerbound}
\end{align}
Since $\lambda^+$ and $ \lambda^-$ are measurable and bounded on $E$, we can conclude with Theorem~\ref{SDE_CCV} that
\begin{align*}
\lambda^u & \leq \limsup_{t \to \infty}  \mathbb{E} \left[ \frac{1}{t} \int_0^t \lambda^+( \varphi(s,\cdot,x)) \rmd s \bigg| \tilde T(\cdot,x) > t \right]  \\ &= \lim_{t \to \infty}  \mathbb{E}_x \left[ \frac{1}{t} \int_0^t \lambda^+( X_s) \, \rmd s \bigg| T > t \right]
= \int_{E} \lambda^+(x)\, m (\rmd x)\,,
\end{align*}
and
\begin{align*}
\lambda^l &\geq \liminf_{t \to \infty}  \mathbb{E} \left[ \frac{1}{t} \int_0^t \lambda^-( \varphi(s,\cdot,x)) \rmd s \bigg| \tilde T(\cdot,x) > t \right]\\ &= \lim_{t \to \infty}  \mathbb{E}_x \left[ \frac{1}{t} \int_0^t \lambda^-( X_s) \, \rmd s \bigg| T > t \right] = \int_{E} \lambda^-(x) \, m (\rmd x)\,.
\end{align*}
The fact that $\lambda^l \leq \lambda \leq \lambda^u$ follows directly from the respective definitions.
\end{proof}

We aim at providing an explicit formula for the Lyapunov exponent $\lambda$ in case the stochastic differential equation \eqref{MultidimSDE} is one-dimensional. Then the problem is reduced to considering systems on an open interval $E=I=(a,b)$. In this case, the finite-time Lyapunov exponents are given by
\begin{equation*}
\lambda(t,\omega,x) = \frac{1}{t} \ln \left|\rmD \varphi(t,\omega,x) \right| \fa  t < \tilde T(\omega,x)\,,
\end{equation*}
where $ \rmD \varphi(t,\omega,x)$ solves the linear variational equation $\dot{v}(t,\omega,x) = f'(\varphi(t,\omega,x) ) v(t,\omega,x)$, so we can immediately infer that
\begin{equation}\label{varequ1d}
\lambda(t,\omega,x) = \frac{1}{t} \int_0^t f'(\varphi(s,\omega,x)) \, \rmd s  \fa t < \tilde T(\omega,x)\,.
\end{equation}
In order to compute the quasi-stationary distribution in this one-dimensional context, we follow \cite[Section~6.1.1]{cmm13} and define
$$\gamma(x) :=  \frac{2}{\sigma^2}\int_{a}^x f(y) \, \rmd y \fa x\in [a,b]$$  and the absolutely continuous measure $\mu$ with Lebesgue density $x\mapsto \exp\left( \gamma(x) \right)$
on $I$.
As in Theorem~\ref{SDE_CCV}, we consider the generator of the semigroup associated with \eqref{MultidimSDE}, given by
\begin{equation*} 
\mathcal{L} \cdot = \frac{\sigma^2}{2} \partial_{xx} \cdot + f \partial_{x} \cdot\,,
\end{equation*}
with Dirichlet boundary conditions at $x = l$ and $x = r$,
and its formal adjoint
\begin{equation*} 
\mathcal{L}^* \cdot = \frac{\sigma^2}{2} \partial_{xx} \cdot - \partial_{x} (f \cdot)\,.
\end{equation*}
With standard theory (see e.g.~\cite[Chapter 7]{cd55}), we observe that $\mathcal{L}$ is self-adjoint in $L^2(I, \mu)$ with simple eigenvalues $0$ and $\lambda_n < 0$ for $n \in\mathbb N_0$, such that the only possible accumulation point of the set $\{\lambda_n \,:\, n \in\mathbb N_0 \}$ is $-\infty$.
As in Theorem~\ref{SDE_CCV}, we denote the largest non-zero eigenvalue by $\lambda_0$ and by $\psi$ the unique solution to
$$\mathcal{L} \psi = \lambda_0 \psi\,,\quad \psi(a) = \psi(b) = 0\,,\quad  \int_I \psi^2 \rmd \mu = 1 \quad\mbox{and}\quad \psi'(a) > 0\,.$$
We further observe that $\phi(x) = \psi(x) \exp(\gamma(x))$ satisfies
$$ \mathcal{L}^* \phi = \lambda_0 \phi\,,\quad  \phi(a) = \phi(b) = 0  \quad\mbox{and}\quad \phi'(a) > 0\,.$$
In our one-dimensional setting, we now provide an explicit formula for the conditioned Lyapunov exponent $\lambda$ obtained in Theorem~\ref{fk_killed}.
\begin{proposition}[Conditioned Lyapunov exponent in one dimension] \label{Lyaponedim}
Consider the stochastic differential equation \eqref{MultidimSDE} on the interval $E=I \subset \mathbb{R}$. Then the conditioned Lyapunov exponent $\lambda$ is given by
\begin{equation} \label{lambda_onedim}
\lambda = \lim_{t \to \infty} \mathbb{E} \big[ \lambda(t, \cdot, x) \big| \tilde T(\cdot,x) > t \big] = \int_I f'(y) \, m(\rmd y) = \int_I f'(y) \psi^2(y) e^{\gamma(y)}\, \rmd y \fa x\in I\,.
\end{equation}
\end{proposition}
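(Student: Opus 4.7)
The one-dimensional setting is quite special because the linearised cocycle is scalar, so no angular process is needed. My plan is to bypass the general machinery of Theorem~\ref{fk_killed} and instead apply the definition of a quasi-ergodic distribution directly to the explicit functional representation in \eqref{varequ1d}, and then substitute the explicit 1D form of $m$ coming from Theorem~\ref{SDE_CCV}(b) together with the self-adjointness of $\mathcal L$ in $L^2(I,\mu)$.

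First, from \eqref{varequ1d} we have, for $t < \tilde T(\omega,x)$,
\[
\lambda(t,\omega,x) \;=\; \frac{1}{t}\int_0^t f'\bigl(\varphi(s,\omega,x)\bigr)\,\rmd s \;=\; \frac{1}{t}\int_0^t h(X_s)\,\rmd s,
\]
where $h(y):=f'(y)$. Since $f$ is $C^1$ and $I$ is bounded (with $f$ extending continuously to $\bar I$), the function $h$ is bounded and measurable on $I$, so the hypotheses of the QED definition \eqref{QEDdef} apply. Taking conditional expectations of both sides, using that the QED $m$ produced by Theorem~\ref{SDE_CCV}(b) exists in the one-dimensional setting, and invoking the uniformity in $x\in I$ of the convergence in \eqref{QEDdef}, yields
\[
\lim_{t\to\infty}\mathbb E\bigl[\lambda(t,\cdot,x)\,\big|\,\tilde T(\cdot,x)>t\bigr]
\;=\;\int_I f'(y)\,m(\rmd y),
\]
which is the first equality in \eqref{lambda_onedim}. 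This shows in particular that this limit coincides with the conditioned Lyapunov exponent $\lambda$ defined in Theorem~\ref{fk_killed} (note that in one dimension $\mathbb S^{d-1}=\mathbb S^0=\{\pm 1\}$ and the integrand $\langle s,\rmD f(y)s\rangle = f'(y)$ is independent of $s$, so the joint QED $\tilde m$ projects trivially to $m$ and the two formulas agree).

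It remains to derive the density formula $m(\rmd y)=\psi^2(y)e^{\gamma(y)}\,\rmd y$. From Theorem~\ref{SDE_CCV}(b) we have $m(\rmd y)=\eta(y)\,\nu(\rmd y)$, where $\eta$ is the right $\lambda_0$-eigenfunction of the generator $\mathcal L$ and $\nu$ is the QSD. In the one-dimensional self-adjoint setting just recalled, the QSD has Lebesgue density proportional to $\phi$, while $\mathcal L\psi=\lambda_0\psi$ identifies $\eta$ with a scalar multiple of $\psi$; via the relation $\phi(y)=\psi(y)\,e^{\gamma(y)}$ this gives
\[
m(\rmd y) \;=\; C\,\psi(y)\cdot\psi(y)\,e^{\gamma(y)}\,\rmd y \;=\; C\,\psi^2(y)\,e^{\gamma(y)}\,\rmd y,
\]
for some constant $C>0$. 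The normalization condition $\int_I \psi^2\,\rmd\mu=1$ with $\rmd\mu=e^{\gamma(y)}\rmd y$ then forces $C=1$, since this exactly makes $m$ a probability measure. Substituting into the integral identity above completes the proof.

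The only mildly delicate point is the bookkeeping of normalisations: showing that the particular normalisation chosen for $\psi$ is precisely the one that makes $\eta\cdot\nu$ a probability measure. This is straightforward because the free normalisation constants in $\nu$ (to be a probability measure) and in $\eta$ (which is determined up to a multiplicative constant by \eqref{etadef}) combine into one constant that is pinned down by the single condition $\int \eta\,\rmd\nu = \int m = 1$, and $\int_I\psi^2\,\rmd\mu=1$ achieves exactly this. No dynamical input beyond Theorems~\ref{SDE_CCV} and~\ref{fk_killed} and the standard Sturm–Liouville theory already cited is needed.
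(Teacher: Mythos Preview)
Your proof is correct and follows essentially the same approach as the paper: both use \eqref{varequ1d} together with the boundedness of $f'$ to invoke the QED property from Theorem~\ref{SDE_CCV}, and then identify $m$ explicitly via $m=\eta\,\nu$ with $\eta\propto\psi$ and $\nu\propto\phi=\psi e^{\gamma}$, fixing the constant through the normalisation $\int_I\psi^2\,\rmd\mu=1$. Your additional remark that in one dimension $\mathbb S^0=\{\pm1\}$ makes the angular variable trivial is a nice clarification but not needed for the argument.
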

\begin{proof}
It follows immediately from Theorem~\ref{SDE_CCV}, by using that $f'$ is bounded and measurable on $I$ and by definition of the QED $m$, that
\begin{align*}
 \lim_{t \to \infty} \mathbb{E} \big[ \lambda(t, \cdot, x) \big| \tilde T(\cdot,x) > t \big] &\stackrel{\eqref{varequ1d}}{=} \lim_{t \to \infty} \mathbb{E} \left[ \frac{1}{t} \int_0^t f'(\varphi(s,\cdot,x)) \, \rmd s   \bigg| \tilde T(\cdot,x) > t \right] = \lim_{t \to \infty} \mathbb{E}_x \left[ \frac{1}{t} \int_0^t f'(X_s) \, \rmd s   \bigg| T > t \right]  \\
 &\stackrel{\eqref{QEDdef}}{=} \int_I f'(y) \, m(\rmd y)\,.
\end{align*}
The formula for this integral can be derived as follows. We know from \cite[Theorem 6.4]{cmm13} that
$$ \nu (\rmd x) = \frac{\phi(x) \, \rmd x}{\int_I \phi(y) \,\rmd y } = \frac{\psi(x) \mu (\rmd x)}{\int_I \psi(y) \mu (\rmd y) } \,.$$
From the fact that $\eta$ has to be proportional to $\psi$ according to Theorem~\ref{SDE_CCV} and the normalization condition on $\psi$, we obtain
$$ \eta(x) =  \left( \int_I \psi(y) \mu(\rmd y)\right) \psi(x)$$
and
$$ m(\rmd x) = \psi^2(x) \mu(\rmd x) =  \psi^2(x) \exp\left( \gamma(x) \right) \rmd x\,.$$
This finishes the proof.
\end{proof}
\subsection{Convergence to the conditioned Lyapunov exponent}

We observe that the conditioned Lyapunov exponent $\lambda$ from Theorem~\ref{fk_killed} (and given in Proposition~\ref{Lyaponedim} for one dimension) is defined as a limit of conditioned expectation. Due to the killing at the boundary, the question whether  the finite-time Lyapunov exponents $\lambda_v (t, \omega, x)$ converge to $\lambda$ as $t\to\infty$ for almost all $\omega\in\Omega$ is ill-posed. In this subsection, we prove Theorem B, i.e.~a convergence result for finite-time Lyapunov exponents in probability, which is the strongest possible form of convergence in this context.

We first need the following proposition about decay of correlations.
\begin{proposition} \label{decayofcorr}
Generalizing the setting in Section~\ref{general}, let $(X_t)_{t \geq 0}$ be a Markov process on a topological state space $E$ with absorption at the boundary $\partial E$ with a quasi-stationary distribution $\nu$ and quasi-ergodic distribution $m$ satisfying (A1) and (A2). Then for any measurable and bounded functions $f, g: E \to \mathbb{R}$ and $ 0 < r < q < 1$, we have
$$ \lim_{t \to \infty} \mathbb{E}_{x} \big[ f(X_{qt}) g(X_{rt})\big |T > t \big] =  \int f\, \rmd m \int g \,\rmd m $$
uniformly for all $x \in E$.
\end{proposition}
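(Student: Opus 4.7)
The plan is to decouple $f(X_{qt})$ and $g(X_{rt})$ by applying the Markov property at the intermediate times $rt$ and $qt$, then invoke the uniform convergences from Theorem~\ref{SDE_CCV}. Writing $h_\tau(y) := \mathbb{P}_y(T>\tau)$ and $\tilde h_\tau := h_\tau/\mathbb{P}_\nu(T>\tau)$, and using the decomposition $t = rt+(q-r)t+(1-q)t$ together with the Markov property twice, the numerator and denominator of the target conditional expectation become
\begin{equation*}
\mathbb{E}_x\big[f(X_{qt}) g(X_{rt}) \mathbf{1}_{T>t}\big] = \mathbb{E}_x\big[g(X_{rt}) \mathbf{1}_{T>rt}\, F_t(X_{rt})\big], \quad \mathbb{P}_x(T>t) = \mathbb{E}_x\big[\mathbf{1}_{T>rt}\, h_{(1-r)t}(X_{rt})\big],
\end{equation*}
with $F_t(y) := \mathbb{E}_y\big[f(X_{(q-r)t}) h_{(1-q)t}(X_{(q-r)t}) \mathbf{1}_{T>(q-r)t}\big]$. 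Dividing both by the common factor $\mathbb{P}_\nu(T>(1-r)t)=e^{\lambda_0(1-r)t}$, the ratio we must study becomes
\begin{equation*}
\frac{\mathbb{E}_x\big[g(X_{rt})\, \tilde F_t(X_{rt}) \,\big|\, T > rt\big]}{\mathbb{E}_x\big[\tilde h_{(1-r)t}(X_{rt}) \,\big|\, T > rt\big]}\,,
\end{equation*}
where $\tilde F_t := F_t/\mathbb{P}_\nu(T>(1-r)t)$.

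The two key ingredients are: (i) $\tilde h_\tau\to\eta$ uniformly on $E$, by Theorem~\ref{SDE_CCV}(b); and (ii) $\mathbb{E}_x[\phi(X_s)\mid T>s] = \int\phi\,\rmd\nu + O(\|\phi\|_\infty e^{-\gamma s})$ uniformly in $x$, from the total-variation bound in Theorem~\ref{SDE_CCV}(a). Using the factorization $\mathbb{P}_\nu(T>(1-r)t) = \mathbb{P}_\nu(T>(q-r)t)\,\mathbb{P}_\nu(T>(1-q)t)$, one writes
\begin{equation*}
\tilde F_t(y) = \tilde h_{(q-r)t}(y)\cdot \mathbb{E}_y\big[f(X_{(q-r)t})\, \tilde h_{(1-q)t}(X_{(q-r)t}) \,\big|\, T > (q-r)t\big] \xrightarrow{t\to\infty} \eta(y)\int f\,\rmd m\,,
\end{equation*}
uniformly in $y$, since (i) handles the prefactor and the inner conditional expectation converges uniformly by (ii) applied to the bounded observable $f\,\tilde h_{(1-q)t}$, whose pointwise limit $f\eta$ integrates against $\nu$ to $\int f\,\rmd m$. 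Plugging these uniform limits back into the ratio and applying (ii) once more to each outer conditional expectation, the numerator converges uniformly in $x$ to $\int g\eta\,\rmd\nu\cdot\int f\,\rmd m = \int g\,\rmd m\int f\,\rmd m$, and the denominator to $\int\eta\,\rmd\nu=1$ since $m=\eta\nu$ is a probability measure. This yields the claimed uniform limit $\int f\,\rmd m \int g\,\rmd m$.

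The main obstacle is tracking the uniformities through two layers of Markov decomposition: the auxiliary functions $\tilde h_{(1-q)t}$ and $\tilde F_t$ themselves vary with $t$, so the conditional-expectation convergence in (ii) must be quantified in $\|\phi\|_\infty$, and uniform boundedness of $\|\tilde F_t\|_\infty$ and $\|\tilde h_\tau\|_\infty$ (in $t$ and $\tau$) must be verified. Both follow from their uniform convergence to the bounded eigenfunction $\eta$. Once these bounds are in hand, the $O(e^{-\gamma s})$ decay provided by Theorem~\ref{SDE_CCV}(a) on each of the three disjoint timescales $rt$, $(q-r)t$, $(1-q)t$ ensures the uniformity in $x\in E$ required by the statement.
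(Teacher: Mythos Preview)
Your argument is correct and takes a genuinely different route from the paper. The paper proceeds by a sandwich argument: restricting first to $f,g\ge 0$, it introduces the auxiliary function $h_u(x)=\inf\{e^{-\lambda_0 t}\mathbb P_x(T>t)/\eta(x):t\ge u\}$ to turn the Markov decomposition into a \emph{lower} bound, proves $\liminf_{t\to\infty}\mathbb E_x[f(X_{qt})g(X_{rt})\mid T>t]\ge \int f\,\rmd m\int g\,\rmd m$, and then recovers the matching upper bound by the algebraic trick of replacing $f,g$ by $(\|f\|_\infty\pm f)(\|g\|_\infty\mp g)$. You instead keep exact equalities throughout: you write the conditional expectation as a ratio of two conditional expectations at time $rt$, identify the uniform limits of the $t$-dependent observables $\tilde F_t$ and $\tilde h_{(1-r)t}$ as $\eta\cdot\int f\,\rmd m$ and $\eta$ respectively, and conclude. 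Your approach is shorter and avoids the positivity reduction and the $\liminf/\limsup$ gymnastics; its price is that you must control conditional expectations of observables that themselves vary with $t$, which you correctly handle by noting that the total-variation estimate \eqref{qsdexpconv} is explicit in $\|\phi\|_\infty$ and that the families $\{\tilde h_\tau\}$ and $\{\tilde F_t\}$ are uniformly bounded thanks to the uniform convergence $\tilde h_\tau\to\eta$. Both proofs ultimately rest on the same two facts from Theorem~\ref{SDE_CCV}: the uniform exponential convergence to $\nu$ and the uniform convergence $e^{-\lambda_0 t}\mathbb P_x(T>t)\to\eta(x)$.
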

\begin{proof}
Let $\eta:E\to \R^+_0$ be the bounded eigenfunction of $\mathcal L$ for the eigenvalue $\lambda_0$ such that $m(\rmd x) = \eta(x) \nu(\rmd x)$ (see Theorem~\ref{SDE_CCV}).

We consider first $f, g: E \to \mathbb{R}_0^+$, and let $x\in E$. Similarly to the proof of Theorem~\ref{SDE_CCV}, we fix $u >0$ and define the observable
$$ h_u(x) = \inf \big\{ e^{-\lambda_0 t} \mathbb{P}_{x}( T > t)/ \eta(x)\,:\,t\geq u \big\}\,.$$
Let $t$ be large enough such that $(q-r)t \geq u$ and $ (1-q)t \geq u$. We obtain with the Markov property
\begin{align*}
\mathbb{E}_{x} \big[ f(X_{qt}) g(X_{rt}) \big|T > t \big] &= \frac{\mathbb{E}_{x}\big[g(X_{rt}) f(X_{qt})\mathds 1_{\{T > t \}}\big]}{\mathbb{P}_{x}( T > t)} \\
&= \frac{\mathbb{E}_{x}\big[g(X_{rt}) f(X_{qt}) \mathds 1_{\{T > qt \}} \mathbb{P}_{X_{qt}}(T > (1-q)t ) \big]}{\mathbb{P}_{x}( T > t)}\\
&\geq \frac{ \mathbb{E}_{x}\big[ g(X_{rt}) \mathds 1_{\{ T > qt \}}  e^{-\lambda_0 (q-1)t}f(X_{qt}) h_u(X_{qt}) \eta(X_{qt}) \big ]}{ \mathbb{P}_{x}( T > t)}\,.
\end{align*}
Let us denote $\rho(x) = f(x) h_u(x) \eta(x)$. Recall that $\eta$ is bounded, and the limit \eqref{etadef} in the definition of $\eta$ is uniform in $x$. Hence, there exists a constant $C > 0$ such that for all $t \geq u$ and $x \in E$,
\begin{equation} \label{bound_hu}
\left| \rho(x) \right| = \left| f(x) h_u(x) \eta(x) \right| \leq  \left| f(x) e^{-\lambda_0 t} \mathbb{P}_{x}( T > t) \right| \leq C \|f\|_{\infty} \|\eta\|_{\infty}\,.
\end{equation}
The same holds for $\tilde{\rho}(x) = g(x) h_u(x) \eta(x)$, and hence, the functions $\rho$ and $\tilde \rho$ are bounded and obviously measurable.
Similarly to the estimate above, using the Markov property, we obtain
\begin{align}
& \mathbb{E}_{x}\big [ f(X_{qt}) g(X_{rt}) \big|T > t \big] \geq e^{-\lambda_0 (q-1)t} \frac{ \mathbb{E}_{x}\big[ g(X_{rt}) \mathds 1_{\{T > rt \}} \mathbb{E}_{X_{rt}} [ \mathds 1_{\{ T > (q-r)t \}}  \rho(X_{(q-r)t})]  \big]}{ \mathbb{P}_{x}( T > t)} \nonumber\\
\vspace{0.5cm}
&\geq e^{-\lambda_0 (q-1)t}  \mathbb{E}_{x}\left[\frac{ g(X_{rt}) h_u(X_{rt}) \eta (X_{rt}) \mathds 1_{\{ T > rt \}} \mathbb{E}_{X_{rt}} [ \mathds 1_{\{ T > (q-r)t\}}  \rho(X_{(q-r)t})]  }{ \mathbb{P}_{x}( T > t) \mathbb{P}_{X_{rt}} (T > t) e^{-\lambda_0 t} }\right] \nonumber\\
\vspace{0.5cm}
&=   \mathbb{E}_{x}\left[\frac{ e^{-\lambda_0 rt} \tilde{\rho}(X_{rt}) \mathds 1_{\{ T > rt \}}}{\mathbb{P}_{x}( T > t) e^{-\lambda_0 t}}\cdot \frac{\mathds 1_{\{ T > rt \}}e^{-\lambda_0 (q-r)t} \mathbb{E}_{X_{rt}} [ \mathds 1_{\{ T > (q-r)t \}}  \rho(X_{(q-r)t})]  }{  \mathbb{P}_{X_{rt}} (T > t) e^{-\lambda_0 t} } \right]\,.\label{estimatemarkov}
\end{align}
For the second factor in the expectation in \eqref{estimatemarkov}, we obtain
\begin{eqnarray*}
 && \!\!\!\left| \mathds 1_{\{ T > rt \}} \frac{ e^{-\lambda_0 (q-r)t} \mathbb{E}_{X_{rt}} [ \mathds 1_{\{ T > (q-r)t \}}  \rho(X_{(q-r)t})]  }{  \mathbb{P}_{X_{rt}} (T > t) e^{-\lambda_0 t} } -  \int \rho \, \rmd \nu \right| \\
&= &\!\!\! \left| \mathds 1_{\{ T > rt \}} \frac{e^{-\lambda_0 (q-r)t} \mathbb{P}_{X_{rt}}( T > (q-r)t)}{ e^{-\lambda_0 t} \mathbb{P}_{X_{rt}}( T > t)}\mathbb{E}_{X_{rt}}[\rho(X_{(q-r)t})| T > (q-r)t] - \int \rho \, \rmd \nu \right|\\
 &\stackrel{\eqref{qsdexpconv}}{\leq} &\!\!C e^{ - \gamma t} + \| \rho \|_{\infty} \left| \mathds 1_{\{ T > rt \}} \frac{e^{-\lambda_0 (q-r)t} \mathbb{P}_{X_{rt}}( T > (q-r)t)}{ e^{-\lambda_0 t} \mathbb{P}_{X_{rt}}( T > t)}  - 1 \right| \\
&\leq & \!\!\!C e^{- \gamma t} + \| \rho \|_{\infty} \sup_{x \in E} \left| \frac{e^{-\lambda_0 (q-r)t} \mathbb{P}_{x}( T > (q-r)t)}{ e^{-\lambda_0 t} \mathbb{P}_{x}( T > t)}  - \frac{\eta(x)}{\eta(x)} \right|\,,
\end{eqnarray*}
which converges to $0$ as $t \to \infty$, since the convergence in \eqref{etadef} is uniform in $x$.
Hence, the second factor in the expectation in \eqref{estimatemarkov} converges uniformly to
$$ \int \rho \, \rmd \nu =  \int f h_u \, \rmd m \,.$$
We observe that
\begin{align*}
& \liminf_{t \to \infty} \mathbb{E}_{x} \big[ f(X_{qt}) g(X_{rt}) \big|T > t \big] \\
&\geq  \lim_{t \to \infty}  \mathbb{E}_{x}\left[\frac{ e^{-\lambda_0 rt} \tilde{\rho}(X_{rt}) \mathds 1_{\{ T > rt \}}}{\mathbb{P}_{x}( T > t) e^{-\lambda_0 t}} \frac{e^{-\lambda_0 (q-r)t} \mathbb{E}_{X_{rt}} [ \mathds 1_{\{ T > (q-r)t \}}  \rho(X_{(q-r)t})]  }{  \mathbb{P}_{X_{rt}} (T > t) e^{-\lambda_0 t} } \right] \\
&= \lim_{t \to \infty}  \frac{\mathbb{E}_{x}\left[ e^{-\lambda_0 rt} \tilde{\rho}(X_{rt}) \mathds 1_{\{ T > rt \}} \right]}{\mathbb{P}_{x}(T > t) e^{-\lambda_0 t}}  \int f h_u \, \rmd m\\
&= \int g h_u \, \rmd m \int f h_u \, \rmd m\,.
\end{align*}
Since $h_u$ is uniformly bounded and $h_u(x) \to 1$ as $u\to\infty$, we have by the dominated convergence theorem that
$$ \liminf_{t \to \infty} \mathbb{E}_{x} \big[ f(X_{qt}) g(X_{rt}) \big|T > t \big] \geq  \int f \,\rmd m \int g \,\rmd m \,.$$
Replacing $f(X_{qt}) g(X_{rt})$ by
$$(\| f \|_{\infty} - f(X_{qt}))(\| g \|_{\infty} + g(X_{rt}))\quad \text{and} \quad (\| f \|_{\infty} + f(X_{qt}))(\| g \|_{\infty} - g(X_{rt}))\,,$$
we can see directly that
\begin{align*}
&2 \| f \|_{\infty} \| g \|_{\infty} - \limsup_{t \to \infty} \mathbb{E}_{x} \big[ 2 f(X_{qt}) g(X_{rt}))\big |T > t \big] \\
& \geq \liminf_{t \to \infty} \mathbb{E}_{x} \big[  (\| f \|_{\infty} - f(X_{qt}))(\| g \|_{\infty} + g(X_{rt})) \big|T > t \big] \\
&\quad+ \liminf_{t \to \infty} \mathbb{E}_{x} \big[  (\| f \|_{\infty} + f(X_{qt}))(\| g \|_{\infty} - g(X_{rt})) \big|T > t \big] \\
& \geq 2 \| f \|_{\infty} \| g \|_{\infty} - 2\int f \,\rmd m \int g \,\rmd m\,.
\end{align*}
Therefore, we deduce that
\begin{equation*}
\limsup_{t \to \infty} \mathbb{E}_{x} \big[ f(X_{qt}) g(X_{rt}) \big|T > t \big] \leq  \int f \,\rmd m \int g \rmd m \,.
\end{equation*}
We have shown that the claim holds for non-negative measurable and bounded functions $f,g$. We can extend the result to arbitrary measurable and bounded functions $f,g$ by replacing $fg$ with $(f_+ - f_-)(g_+ - g_-)$. Uniformity of the convergence follows for the same reasons as in Theorem~\ref{SDE_CCV}.
\end{proof}

We provide the following detailed version of Theorem B, which equips the limit of expected values $\lambda$, as given in Theorem~\ref{fk_killed} and Proposition~\ref{Lyaponedim}, with the strongest possible dynamical meaning in the setting of killed processes.

\begin{theorem}[Convergence in conditional probability]  \label{coninprob_md}
Consider the stochastic differential equation \eqref{MultidimSDE} corresponding to the Markov process $(X_t)_{t \geq 0}$ that is killed at $\partial E$ such that the conditioned Lyapunov exponent $\lambda$ exists.
Then for all $\epsilon > 0$, we have
\begin{equation} \label{weak law_md}
\lim_{t  \to \infty} \mathbb{P} \Big( \big| \lambda_v(t,\cdot,x) - \lambda \big| \geq \epsilon \Big| \tilde T(\cdot,x) > t \Big) = 0
\end{equation}
uniformly for all $x \in E$ and $v \in \mathbb S^{d-1}$.
This means that the finite-time Lyapunov exponents of the surviving trajectories converge to the Lyapunov exponent $\lambda$ in probability.
\end{theorem}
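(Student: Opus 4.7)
The plan is to establish convergence in probability by a second moment (variance) argument: since $\lambda_v(t,\cdot,x)$ already converges to $\lambda$ in conditional mean by Theorem~\ref{fk_killed}, it suffices to show that its conditional variance vanishes, so that a Chebyshev estimate finishes the job. Concretely, I would use the Furstenberg--Khasminskii representation from the proof of Theorem~\ref{fk_killed},
\[
\lambda_v(t,\omega,x) \;=\; \frac{1}{t}\int_0^t h\bigl(\varphi(\tau,\omega,x),s_\tau(\omega,x)\bigr)\,\rmd\tau, \qquad h(y,s)=\langle s,\rmD f(y)s\rangle,
\]
where $h$ is bounded on $E\times\mathbb S^{d-1}$ because $\rmD f$ is continuous on the compact set $\bar E$. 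The square then equals a double time average of the product $h(\varphi(\tau_1),s_{\tau_1})\,h(\varphi(\tau_2),s_{\tau_2})$.

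Next I would apply the decay-of-correlations statement (Proposition~\ref{decayofcorr}) to the extended Markov process $(X_t,s_t)_{t\ge 0}$ on $E\times\mathbb S^{d-1}$; the hypotheses (A1)--(A2) for this process were already verified in the proof of Theorem~\ref{fk_killed}, and the observable $h$ is bounded and measurable. Rescaling time via $\tau_i=r_i t$ for $(r_1,r_2)\in(0,1)^2$ and using $\int h\,\rmd\tilde m = \lambda$, the integrand of
\[
\mathbb{E}\!\left[\lambda_v(t,\cdot,x)^2 \,\Big|\, \tilde T(\cdot,x)>t\right] \;=\; \int_0^1\!\!\int_0^1 \mathbb{E}\!\left[h\bigl(X_{r_1 t},s_{r_1 t}\bigr)\,h\bigl(X_{r_2 t},s_{r_2 t}\bigr) \,\Big|\, T>t\right] \rmd r_1\,\rmd r_2
\]
converges pointwise (off the diagonal $r_1=r_2$, a null set) to $\lambda^2$, uniformly in $x$ and $s_0$. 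Since $h$ is bounded, the dominated convergence theorem yields $\mathbb{E}[\lambda_v(t,\cdot,x)^2\mid \tilde T(\cdot,x)>t] \to \lambda^2$ uniformly in $(x,v)\in E\times\mathbb S^{d-1}$, and combining with Theorem~\ref{fk_killed} gives
\[
\Var\bigl(\lambda_v(t,\cdot,x)\,\big|\,\tilde T(\cdot,x)>t\bigr) \;\longrightarrow\; 0 \qquad \text{uniformly in }(x,v).
\]

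The conclusion then follows from Chebyshev's inequality: for any $\epsilon>0$, choose $t$ so large that $|\mathbb{E}[\lambda_v(t,\cdot,x)\mid \tilde T(\cdot,x)>t]-\lambda|<\epsilon/2$ uniformly in $(x,v)$, whence
\[
\mathbb{P}\bigl(|\lambda_v(t,\cdot,x)-\lambda|\ge\epsilon \,\big|\, \tilde T(\cdot,x)>t\bigr) \;\le\; \frac{4\Var\bigl(\lambda_v(t,\cdot,x)\mid \tilde T(\cdot,x)>t\bigr)}{\epsilon^2} \;\longrightarrow\; 0,
\]
uniformly in $x\in E$ and $v\in\mathbb S^{d-1}$.

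The main obstacle is controlling the conditional second moment, because naive approaches (e.g.~a Birkhoff-type ergodic theorem for the time average) fail in this killed setting: there is no flow-invariant measure on path space that would allow an almost sure pointwise ergodic argument. The decay-of-correlations proposition sidesteps this by directly comparing the two-point conditional expectation to the product of spatial integrals against the QED. Care is needed to check that the application of dominated convergence is uniform in the initial datum $(x,s_0)$; this inherits from the uniformity built into Proposition~\ref{decayofcorr} and ultimately from the exponential convergence~\eqref{qsdexpconv} and the uniform convergence of the ratio $e^{-\lambda_0 t}\mathbb{P}_x(T>t)\to\eta(x)$.
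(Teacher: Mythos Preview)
Your proposal is correct and follows essentially the same route as the paper: reduce to a conditional variance estimate via Chebyshev, express the conditional second moment as a double time integral of $\mathbb{E}[h(X_{r_1 t},s_{r_1 t})h(X_{r_2 t},s_{r_2 t})\mid T>t]$, apply Proposition~\ref{decayofcorr} to the extended process $(X_t,s_t)$ off the diagonal, and conclude by dominated convergence. The only cosmetic difference is that the paper splits $\mathbb{P}(|\lambda_v-\lambda|\ge\epsilon)$ into two probability terms first and then invokes Chebyshev, whereas you absorb the mean shift by passing to $\epsilon/2$; both are equivalent.
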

\begin{proof}
Recall from above that
\begin{align*}
\lambda &= \lim_{t \to \infty} \mathbb{E}_{x, s_0} [ \lambda_v(t, \cdot, x) | T > t ] = \lim_{t \to \infty} \mathbb{E} \big[ \lambda_v(t, \cdot, x) \big| \tilde T(\cdot,x) > t \big] \\
&= \int_{\mathbb S^{d-1} \times E}  \langle s, \rmD f(x) s \rangle \ \tilde{m}(\rmd s, \rmd x)\,.
\end{align*}
In the following, we will write $g(x,s) = \langle s, \rmD f(x) s \rangle $ and
$\tilde m(h) := \int h \, \rmd \tilde m$
for any bounded and measurable function $h$. Note that in dimension one, we have $g(x,s) = f'(x)$ and $\tilde m = m$.
We observe that
\begin{align}
&\mathbb{P} \big( | \lambda_v(t,\cdot,x) - \lambda| \geq \epsilon \big| \tilde T(\cdot,x) > t \big) \nonumber\\ & \leq
\mathbb{P} \Big( \big| \lambda_v(t,\cdot,x) - \mathbb{E} \big[ \lambda_v(t, \cdot, x) \big| \tilde T(\cdot,x) > t \big] \big| \geq \epsilon \Big| \tilde T(\cdot,x) > t \Big)\label{rel1} \\
 &\quad+ \mathbb{P} \left( \big| \lambda - \mathbb{E} \big[ \lambda_v(t, \cdot, x) \big| \tilde T(\cdot,x) > t \big] \big| \geq \epsilon \Big| \tilde T(\cdot,x) > t \right)\label{rel2}\,.
\end{align}
The term in \eqref{rel1} converges to zero as $t\to\infty$ by definition of $\lambda$. The term in \eqref{rel2} can be estimated by Chebyshev's inequality:
$$ \mathbb{P} \Big( \big| \lambda_v(t,\cdot,x) - \mathbb{E}\big[ \lambda_v(t, \cdot, x) \big| \tilde T(\cdot,x) > t \big] \big| \geq \epsilon \Big| \tilde T(\cdot,x) > t \Big) \leq \frac{\Var\big[\lambda_v(t, \cdot,x)\big| \tilde T(\cdot,x) > t\big] }{\epsilon^2}\,.$$
This means that, in order to prove the claim, we  need to show that
$$ \lim_{t \to \infty} \mathbb{E} \big[ \lambda_v(t, \cdot, x)^2 \big| \tilde T(\cdot,x) > t \big] = \lim_{t \to \infty} \Big(\mathbb{E} \big[ \lambda_v(t, \cdot, x)\big | \tilde T(\cdot,x) > t \big]\Big)^2\,,$$
where
$$\lim_{t \to \infty} \Big(\mathbb{E} \big[ \lambda_v(t, \cdot, x)\big | \tilde T(\cdot,x) > t \big]\Big)^2 = \lambda^2 = \tilde m(g)^2 \,.$$
We obtain with Fubini's Theorem that
\begin{align*}
&\lim_{t \to \infty} \mathbb{E} \big[ \lambda_v(t, \cdot, x)^2 \big| \tilde T(\cdot,x) > t \big]\\
 &=
\lim_{t \to \infty} \mathbb{E}_x \left[ \left(\frac{1}{t} \int_{0}^t g(X_{\tau}, s_{\tau})\, \rmd \tau \right)^2  \bigg| T > t \right]
= \lim_{t \to \infty} \mathbb{E}_x \left[ \left( \int_{0}^1 g(X_{qt},s_{qt}) \, \rmd q \right)^2  \bigg| T > t \right] \\
&= \lim_{t \to \infty}  \int_{0}^1 \int_{0}^1 \mathbb{E}_x \big[g(X_{qt},s_{qt}) g(X_{rt},s_{rt})\big| T > t \big]\rmd q \, \rmd r \\
&= \lim_{t \to \infty} \bigg( \int_{0}^1 \int_{0}^q \mathbb{E}_x \big[g(X_{qt},s_{qt}) g(X_{rt},s_{rt}) \big| T > t \big]\rmd r \, \rmd q
+ \int_{0}^1 \int_{0}^r \mathbb{E}_x \big[g(X_{qt},s_{qt}) g(X_{rt},s_{rt}) \big| T > t \big]\rmd q \, \rmd r  \bigg)\,.
\end{align*}
It follows immediately from Proposition~\ref{decayofcorr} that for $0 < r < q <1$ (and $0 < q < r <1$),
$$ \lim_{t \to \infty}  \mathbb{E}_x \big[g(X_{qt},s_{qt}) g(X_{rt},s_{rt}) \big| T > t \big] = \lim_{t \to \infty}  \mathbb{E}_{x, s_0} \big[g(X_{qt},s_{qt}) g(X_{rt},s_{rt}) \big| T > t \big]= \tilde m(g)^2\,,$$
where the convergence is uniform over the initial values.
Hence, by using dominated convergence, we conclude that
$$ \lim_{t \to \infty} \mathbb{E} \big[ \lambda_v(t, \cdot, x)^2 \big| \tilde T(\cdot,x) > t \big] = \int_{0}^1 \int_{0}^q \tilde m(g)^2 \, \rmd r \, \rmd q +  \int_{0}^1 \int_{0}^r \tilde m(g)^2 \, \rmd q \, \rmd r = \tilde m(g)^2 \,,$$
which finishes the proof of this theorem.
%
\end{proof}
\subsection{Synchronization} \label{locsynchnonlin}

We consider negative Lyapunov exponents $\lambda$ in this subsection, and we show Theorem C from the Introduction, addressing that surviving trajectories starting close enough to each other are synchronizing.
\begin{theorem}[Local synchronization theorem] \label{LocalSM}
Consider the stochastic differential equation \eqref{MultidimSDE} with conditioned Lyapunov exponent $\lambda <0$.
Then for all $\lambda_{\epsilon} \in (\lambda, 0)$, $x \in E$ and $\rho\in( 0, 1)$, there exists an $\alpha_x > 0$ such that
$$
\lim_{t \to \infty}
\mathbb{P} \left( \tfrac{1}{t} \ln \left\|
\varphi(t,\cdot,x)-\varphi(t,\cdot,y)\right\| \leq \lambda_{\epsilon} \mbox{ for all } y \in B_{\alpha_x}(x)
\Big|  \tilde T(\cdot,x) > t \right) > 1 - \rho\,. $$
\end{theorem}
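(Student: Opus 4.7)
The plan is to combine the probabilistic contraction of the linearization $D\varphi(t,\omega,x)$ provided by Theorem~B with a nonlinear perturbation argument. By the fundamental theorem of calculus along the segment from $x$ to $y$,
\[
\varphi(t,\omega,y) - \varphi(t,\omega,x) = \int_0^1 D\varphi\bigl(t,\omega, x+s(y-x)\bigr)(y-x)\,\rmd s,
\]
so that $\|\varphi(t,\omega,y)-\varphi(t,\omega,x)\| \le \|y-x\|\sup_{s\in[0,1]}\|D\varphi(t,\omega,x+s(y-x))\|_{\mathrm{op}}$. This reduces the problem to uniformly controlling the operator norm of the linearization at all $z$ in a small segment through $x$, on a conditional event of large probability.

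First, control the linearization precisely at $x$. Fix $\epsilon_1>0$ with $\lambda + 3\epsilon_1 < \lambda_\epsilon$ and cover $\mathbb{S}^{d-1}$ by a finite $1/2$-net $\{v_1,\dots,v_N\}$. Applying Theorem~B at $x$ in each direction $v_i$ and taking a union bound yields $T_0 > 0$ such that, for all $t \ge T_0$,
\[
\mathbb{P}\bigl(\|D\varphi(t,\cdot,x)\|_{\mathrm{op}} \le 2e^{(\lambda+\epsilon_1)t}\bigm|\tilde T(\cdot,x) > t\bigr) \ge 1-\rho/2,
\]
since a $1/2$-net controls the operator norm up to a factor of $2$ and the prefactor is absorbed in the exponent at rate $\lambda+2\epsilon_1<\lambda_\epsilon$ for large $t$.

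Next, extend this bound to $z\in B_{\alpha_x}(x)$. The difference $W(t):=D\varphi(t,\omega,z)-D\varphi(t,\omega,x)$ satisfies an inhomogeneous linear ODE whose driving term $[Df(\varphi(t,\omega,z))-Df(\varphi(t,\omega,x))]D\varphi(t,\omega,z)$ can be estimated using the modulus of continuity of $Df$ on $\bar E$ together with the naive Gronwall bound $\|\varphi(s,\omega,z)-\varphi(s,\omega,x)\|\le \alpha_x e^{L_f s}$, where $L_f$ is a Lipschitz constant of $f$ on $\bar E$. A further Gronwall estimate then yields $\|D\varphi(t,\omega,z)\|_{\mathrm{op}} \le 2\|D\varphi(t,\omega,x)\|_{\mathrm{op}}$ throughout a time window $[0,T_0]$, provided $\alpha_x$ is chosen sufficiently small depending on $T_0$ and $L_f$.

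The main obstacle is that this last estimate degrades with $t$, so a single-step argument is insufficient as $t\to\infty$. I would overcome this by iterating across successive time windows of length $T_0$: at the end of the first window the separation $\|\varphi(T_0,\omega,y)-\varphi(T_0,\omega,x)\|$ has shrunk by a factor of order $e^{(\lambda+\epsilon_1)T_0}<1$, so the smallness required for the nonlinear estimate is preserved inductively, and one interpolates to $t\in(kT_0,(k+1)T_0)$ by the naive Gronwall bound. The iteration exploits the cocycle identity $\varphi(t+s,\omega,\cdot) = \varphi(s,\theta_t\omega,\varphi(t,\omega,\cdot))$ together with the uniformity in the base point provided by Theorem~B. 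The most delicate point is that the conditioning event $\{\tilde T(\cdot,x)>t\}$ does not decompose cleanly across windows; transferring per-window bounds into a single conditional probability estimate at the terminal time $t$ requires the strong Markov property together with the uniform exponential escape rate from Proposition~\ref{exponential_escape}. Assembling these ingredients then yields the claim for a suitably small $\alpha_x$.
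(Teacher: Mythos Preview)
Your iteration step has a genuine gap. You want to apply Theorem~B once per window of length $T_0$ and concatenate. But Theorem~B at a fixed time $T_0$ furnishes, for each starting configuration, an event of conditional probability $\ge 1-\rho/2$ on which the linearization over that single window contracts; to reach time $t=kT_0$ you must intersect $k$ such events (one per window, each with a shifted base point $\varphi(jT_0,\omega,x)$), and the union bound only gives conditional probability $\ge 1-k\rho/2$, which is vacuous as $t\to\infty$. The Markov property and the escape rate from Proposition~\ref{exponential_escape} can help translate between the conditioning events $\{\tilde T(\cdot,z)>T_0\}$ and the target event $\{\tilde T(\cdot,x)>t\}$, but they do nothing to stop the per-window failure probabilities from accumulating. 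Note also that Theorem~B is convergence \emph{in probability}, so you cannot upgrade it to a single event on which $\|D\varphi(s,\cdot,x)\|\le Ce^{(\lambda+\epsilon_1)s}$ holds for all $s$ simultaneously; the paper explicitly remarks that almost-sure statements are unavailable here.

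The paper's proof avoids any per-window probabilistic control. It uses Theorem~\ref{coninprob_md} only once, at the terminal time $N$, to produce a set $\Omega_x^N$ of conditional probability $>1-\rho$ on which the cumulative exponent $\lambda(N,\omega,x)=\tfrac1N\ln\prod_{n\le N}\|L_n\|$ is below $\lambda+\epsilon/2$. The passage from the linearization at $x$ to nearby trajectories is then handled by a purely deterministic perturbation argument \`a la Ruelle: writing $F^n_{(\omega,x)}(y)=L'_n\cdots L'_1\,y$ with $L'_n=\int_0^1 DF_{\Theta_{n-1}(\omega,x)}(tF^{n-1}(y))\,\rmd t$, one bootstraps on the set $S^k(\beta)=\{y:\|F^n_{(\omega,x)}(y)\|\le\beta e^{\lambda_\epsilon n}\text{ for }n\le k\}$ to obtain $\|L'_n-L_n\|\le\delta e^{-\eta n}$, which is summable. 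Hence $\prod\|L'_n\|$ differs from $\prod\|L_n\|$ only by a fixed multiplicative constant, and the single terminal-time bound on the latter suffices. Your naive Gronwall estimate $\|\varphi(s,\omega,z)-\varphi(s,\omega,x)\|\le\alpha_x e^{L_f s}$ is precisely what has to be replaced: on $S^k(\beta)$ the separation \emph{decays}, and the argument must be arranged so that this self-improvement closes the bootstrap (via the device $S^k(\kappa\beta)\cap\overline{B_{\alpha_x}(0)}=S^k(\beta)\cap\overline{B_{\alpha_x}(0)}$).
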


\begin{proof}
This proof follows \cite[Theorem~5.1]{r79} and is adapted to the setting of conditioned random processes. It is divided in two steps.

\noindent\emph{Step 1.} We show that for all $\lambda_{\epsilon} \in (\lambda, 0)$, $x \in E$ and $\rho\in( 0, 1)$, there are $\alpha_x > 0$, $\beta \in(0, 1)$, and sets $\Omega_x^n \subset \Omega$ with $\mathbb{P}_x\big(\Omega_x^n \big|T>n \big) > 1 - \rho$ for all $n \in \mathbb{N}$ such that for all $n \in \mathbb{N}$, $\omega \in \Omega_x^n$  and $y \in B_{\alpha_x}(x)$, we have
\begin{equation}\label{rel3}
  \| \varphi(n,\omega,x) - \varphi(n,\omega,y) \| \leq \beta e^{\lambda_{\epsilon}n}\,.
\end{equation}

Let $\rho\in(0, 1)$ be fixed for the following. For $x \in E$, there are $0 < \gamma < \dist(x, \partial E)$ and $E_x \subset E$ with $x \in E_x$ and $\dist(E_x,E) \geq \gamma$ such that
\begin{equation} \label{rhohalf}
\mathbb{P}_x\big(X_n \in E_x |T>n \big) > 1 - \frac{\rho}{2} \fa n \in \mathbb N\,.
\end{equation}
Let $\Omega_x^n \subset \big\{ \omega \in \Omega : X_n(\omega) \in E_x, \ \tilde T(\omega,x) > n \big \}$ be a set with $\mathbb{P}_x\big(\Omega_x^n \big|T>n \big) > 1 - \rho$ (note that the construction of this set is given in \eqref{def_omega_n} below). Furthermore, we define for any $x \in E$
$$ U_x :=  \big\{ y \in \mathbb{R}^d\,: \ x + y \in \overline{E}\big\}\,.$$
For fixed $(\omega,x)  \in \Omega \times E$, we define for $y\in U_x$
$$ Z_n((\omega,x),y) := \varphi(n, \omega, y + x) - \varphi(n, \omega, x)\,.$$
Note that in particular $Z_n((\omega,x),0) = 0$ for all $n\in\mathbb N$. Define further
$$ F_{(\omega,x)}(y) := Z_1((\omega,x),y)$$
and write
$$ F_{(\omega,x)}^n = F_{\Theta^{n-1}(\omega,x)} \circ \dots \circ F_{(\omega,x)}\,.$$
In addition, we define
$$ L(\omega,x) = \rmD F_{(\omega,x)}(0) = \rmD \varphi(1, \omega, x)\,, $$
and for all $n \geq 1$,
$$ L_n(\omega,x) = L(\Theta_{n-1}(\omega,x))\,.$$
Similarly to \cite{r79}, let $ 0 < \eta = - \frac{1}{2}\lambda_{\epsilon}$.
Since \eqref{MultidimSDE} is $C^1$ on the bounded domain $E$, we have
$$ G := \sup_{(\omega,x)\in\Omega\times E} \|F_{(\omega,x)}\|_{C^1} < \infty\,.$$
Let $ \delta > 0$ be given. Choose $ 0 < \beta  < 1$ such that $G \beta e^{\eta} < \delta$. Take further $\kappa > 1$ such that $\kappa \beta \leq 1$ and $G \kappa \beta e^{\eta} \leq \delta$, and recall that
$$ \Omega_x^k \subset \big\{ \omega \in \Omega :  \tilde T(\omega,x) > k  \big\}\fa k\in\mathbb N\,.$$
Define
$$S^k(\beta) = \big\{ y \in U_x: \|F_{(\omega,x)}^n(y)\|\leq \beta e^{ \lambda_{\epsilon} n} \mbox{ for all } 0 \leq n \leq k \text{ and } \omega \in \Omega_x^k  \big\} \fa k \in\mathbb N\,.$$
We assume $\kappa \beta  \leq \gamma$ to guarantee that $S^k(\kappa \beta)$ and $S^k(\beta)$ are non-empty and have disjoint boundaries.
For $k\in \mathbb N$ and $y \in S^k(\kappa \beta)$, we define for all $ 1 \leq n \leq k$
\begin{equation*}
 L'_n(\omega,x) = \int_{0}^1 \rmD F_{\Theta_{n-1}(\omega,x)}\big(tF_{(\omega,x)}^{n-1}(y)\big)\,\rmd t \,.
\end{equation*}
Observe that this choice yields for $ 1 \leq n \leq k$
\begin{equation*}
L'^n(\omega,x) y =  L'_n(\omega,x) \cdots   L'_1(\omega,x)y = F_{(\omega,x)}^n(y)\,.
\end{equation*}
We deduce that for any $k\in\mathbb N$ and $y \in S^k(\kappa \beta)$
\begin{align*}
 \sup_{\omega \in \Omega_x^k} \sup_{n \leq k} \|  L'_n(\omega,x) -  L_n(\omega,x) \| e^{ \eta n} &\leq \sup_{\omega \in \Omega_x^k} \sup_{n \leq k} \| \rmD F_{\theta_{n-1} (\omega,x)} \| \kappa \beta \exp{\big(n( \eta + \lambda_{\epsilon}) - \lambda_{\epsilon} \big)}\\
&\leq G \kappa \beta e^{\eta} < \delta\,.
\end{align*}
\emph{Claim}. There exists a $K_{\epsilon} > 1$ such that for all $k\in\mathbb N$, $\omega \in \Omega_x^k$, $y \in S^k(\kappa \beta)$ and $ 1 \leq n \leq k$, we have
$$ \|  L'^n(\omega,x) y\| \leq K_{\epsilon} e^{n \lambda_\epsilon} \|y\|\,.$$

Let us first assume that the claim is true and define $d_x = \frac{1}{2} \dist(x, \partial E)$.
Set $\alpha_x := \min{(d_x, \beta/K_{\epsilon}) } < \beta$. From the claim, we observe that for all $k\in\mathbb N$, $y \in \overline{B_{\alpha_x}(0)} \cap S^k(\kappa \beta)$ and $1 \leq n \leq k,$
$$ \|  F_{(\omega,x)}^n (y)\| \leq K_{\epsilon} e^{n \lambda_\epsilon} \alpha_x \leq \beta e^{n \lambda_\epsilon}\,, $$
and therefore,
$$ D_k(\alpha_x) := \overline{B_{\alpha_x}(0)} \cap S^k(\beta) = \overline{B_{\alpha_x}(0)} \cap S^k(\kappa \beta)\,.$$
Since the boundaries of $S^k(\beta)$ and $S^k(\kappa \beta)$ are disjoint, this implies that $\overline{B_{\alpha_x}(0)} = D_k(\alpha_x)$  for all $k > 0$ and \eqref{rel3} follows.


We now prove the above claim. Fix $(\omega,x)\in \Omega\times E$. Set $t^{(n)} := \|L_n(\omega,x) \|$.
It is enough to consider the one-dimensional case, since the convergence in \eqref{weak law_md} is assumed to be uniform over all $v  \in \mathbb S^{d-1}$ in the $d$-dimensional scenario (see \cite{r79} for higher-dimensional details in the unconditioned case).

We write $u^{(n)}:=L'^n(\omega,x)$ and see that $\sup_{n\in\mathbb N} |  L'_n(\omega,x) -  L_n(\omega,x)| e^{\eta n} < \delta$ implies
\begin{equation} \label{growthinequality}
\big| u^{(n)}  \big| \leq t^{(n)} \big| u^{(n-1)}  \big|  + \delta e^{-n\eta} \big| u^{(n-1)}  \big| \fa n \in\mathbb N\,.
\end{equation}
Since the finite-time Lyapunov exponents are bounded below according to the assumptions of the model, there is a $ C > 0$, independent from $(\omega,x)$, such that
$$ \frac{1}{C} e^{-N\eta} \leq t^{(N)} \fa N \in \mathbb{N}\,.$$
We set $U^{(0)} := \left| u^{(0)}\right|$ and
$$ U^{(N)} := \left( \prod_{n=n_0 +1}^N t^{(n)} \right) \left(\prod_{n=n_0 +1}^N (1 + C\delta e^{-n\eta})\right) U^{(n_0)} \fa N \in \mathbb N\,.$$
We observe with \eqref{growthinequality} that $U^{(N)} \geq \left| u^{(N)}\right|$ for all $N \in \mathbb N_0$.
Now we set $\delta := \frac{1}{C} \prod_{n=1}^\infty (1 - e^{-n\eta})^2$ and
$$C' := \frac{\prod_{n=1}^{\infty} (1 + C\delta e^{-n\eta})}{\prod_{n=1}^{\infty} (1 -  e^{-n\eta})} \leq \prod_{n=1}^{\infty} (1 - e^{-n\eta})^{-2} = \frac{1}{C \delta}\,.$$
Note that $\delta$ and $C'$ do not depend on $(\omega,x)$.
It is easy to infer similarly to \cite{r79} that
\begin{equation} \label{growthabove}
\big| u^{(N)} \big| \leq C' \left(\prod_{n=1}^N t^{(n)} \right) \left(\prod_{n=1}^N (1 - e^{-n\eta})\right)\big| u^{(0)} \big|\fa N\in \mathbb N\,.
\end{equation}
Observe that the finite Lyapunov exponents satisfy
$$ \lambda(N, \omega,x) = \frac{1}{N} \ln \prod_{n=1}^N t^{(n)}\,.$$
Let $\epsilon := \lambda_{\epsilon} - \lambda > 0$.
By Theorem~\ref{coninprob_md}, there exists an $N^* > 0$ such that for all $N \geq N^*$, we have
\begin{equation} \label{rhohalf2}
 \mathbb{P} \left(  \lambda(N, \cdot, x)  < \lambda + \frac{\epsilon}{2} \Big| \tilde T(\cdot,x) > N \right) > 1- \frac{\rho}{2}\,.
 \end{equation}
Define the measurable sets
\begin{equation} \label{def_omega_n}
\Omega_x^N =
\begin{cases}
\big\{ \omega\in \Omega \,:\,  X_n(\omega) \in E_x, \ \tilde T(\omega,x) > N , \ \lambda(N, \omega, x)  < \lambda + \epsilon/2\big\} \ &\text{ if } N \geq N^*\,, \\
\big \{ \omega \in \Omega \,:\,  X_n(\omega) \in E_x, \ \tilde T(\omega,x) > N  \big\} \ &\text{ if } N < N^* \,.
\end{cases}
\end{equation}
Hence, we can deduce from~\eqref{rhohalf} and~\eqref{rhohalf2} that $\mathbb{P}_x\big(\Omega_x^n \big| T > n\big)> 1 - \rho$ for all $n \in \mathbb{N}$.
Recall that $\delta, C, C'$ do not depend on $ (\omega,x)$.
We conclude from \eqref{growthabove} that there is $C'' > 0$  such that for all $x \in E$, $N \geq N^*$ and $\omega \in \Omega_x^N$,
$$ \frac{1}{N} \ln | L'^N(\omega,x) | \leq \frac{C''}{N} + \frac{1}{N} \ln \prod_{n=1}^N t^{(n)} = \frac{ C''}{N} + \lambda(N, \omega,x)\,. $$
Since $\overline{E}$ is compact and $f$ is $C^1$, we have that
$$ s^*:= \sup \big\{\lambda(N,\omega,x): N\le N^*, (\omega,x)\in\Omega\times E \mbox{ such that }\tilde T(\omega,x)\ge N^*\big\} < \infty\,.$$
We define
$$ K_{\epsilon} := \max \big\{e^{C''}, e^{-\lambda_{\epsilon}N^*} s^*\big\}\,.$$
Then the claim follows, i.e.~for all  $N \in \mathbb N$ and $\omega \in \Omega_x^N$, we have
$$ \big| L'^N(\omega,x) \big| \leq K_{\epsilon} e^{ \lambda_{\epsilon} N} \,.$$

\noindent\emph{Step 2.} We finally prove the statement of the theorem. Due to Step 1 and $\beta < 0$, we have
\begin{align*}
&\mathbb{P} \left( \left\|
\varphi(n,\cdot,x)-\varphi(n,\cdot,y)\right\| \leq  e^{ \lambda_{\epsilon} n} \mbox{ for all } y \in B_{\alpha_x}(x) \Big| \tilde T(\cdot,x) > n \right) \\
&\geq \mathbb{P} \big( \Omega_x^n \big| \tilde T(\cdot,x) > n \big)  > 1 - \rho \fa n \in \mathbb{N}\,.
\end{align*}
Hence, we obtain
$$ \lim_{n\to\infty} \mathbb{P} \left( \tfrac{1}{n} \ln \left\|
\varphi(n,\omega,x)-\varphi(,\omega,y)\right\| \leq \lambda_{\epsilon} \mbox{ for all }y \in B_{\alpha_x}(x) \Big| \tilde T(\cdot,x) > n \right) > 1 - \rho \,.$$
Since $\overline{E}$ is compact and $f$ is $C^1$, in this statement, $n\in\mathbb N$ can be replaced by $t\ge 1$, which finishes the proof of this theorem.
\end{proof}

To apply the above theorem, one would need to check that the conditioned Lyapunov exponent $\lambda$ is negative. Recall that Proposition~\ref{bounds} gave bounds from above (and below), which could help to establish the assumption on negativity of $\lambda$. On the other hand, in dimension one, Proposition~\ref{Lyaponedim} provides the explicit formula \eqref{lambda_onedim} for $\lambda$, which depends on the eigenfunction  $\psi$ of $\mathcal L$ for the eigenvalue $\lambda_0$, i.e.
\begin{equation} \label{sturm_liouville}
\frac{1}{2} \sigma^2 \psi''(x) + f(x) \psi'(x) = \lambda_0 \psi(x)\,.
\end{equation}
Using integration by parts, we observe that
\begin{align*}
\lambda=\int_E f'(x) \psi^2(x) e^{\gamma(x)}\rmd x &= - \int_E f(x)\left( \frac{2}{\sigma^2} f(x) \psi^2(x) e^{\gamma(x)}  +  2 \psi(x) \psi'(x) e^{\gamma(x)} \right) \rmd x \\
&= \underbrace{- \frac{2}{\sigma^2} \int_E f^2(x)  m(\rmd x)}_{\leq 0}  - \underbrace{2 \int_E f(x) \psi'(x)  \psi(x )e^{\gamma(x)} \, \rmd x }_{ \text{ sign unclear}}\,.
\end{align*}
Note that the second term vanishes in case $E = \mathbb{R}$ since $\psi$ is a constant function in this case and $m$ is the stationary distribution. That is how one observes negativity of the Lyapunov exponent in the classical unconditioned setting. In our context, the sign of the second term depends on the product $f(x) \psi'(x)$ which makes a direct a priori estimate impossible. However, using \eqref{sturm_liouville}, we get
\begin{align*}
\lambda=\int_E f'(x) \psi^2(x) e^{\gamma(x)}\rmd x &= - \int_E f(x)\left( \frac{2}{\sigma^2} f(x) \psi^2(x) e^{\gamma(x)}  +  2 \psi(x) \psi'(x) e^{\gamma(x)} \right) \rmd x \\
&= \underbrace{- \frac{2}{\sigma^2} \int_E f^2(x) \, m(\rmd x)}_{\leq 0} \underbrace{- 2 \lambda_0}_{ > 0} + \underbrace{\sigma^2 \int_E  \psi''(x)  \psi(x )e^{\gamma(x)}\,  \rmd x }_{ \text{ sign unclear}}\,.
\end{align*}
We remain with a term whose sign is unclear, this time depending on $\psi''(x)$. It is not possible to obtain general statements about the sign of $\lambda$, but the above analysis may help to establish negativity of $\lambda$ in particular cases or numerically.

\section{The conditioned dichotomy spectrum} \label{expdich_sec}

It has been shown recently in \cite{cdlr16} that, in addition to Lyapunov exponents, the dichotomy spectrum is useful for the study of bifurcations in random dynamical systems. In this section, we introduce the conditioned dichotomy spectrum and discuss basic properties.

Although we remain in the context of killed random dynamical systems, we do not need quasi-stationary and quasi-ergodic distributions for the analysis of the dichotomy spectrum. Thus, we formulate everything for more general random dynamical systems that include those generated by the stochastic differential equation~\eqref{MultidimSDE} as a special case.

Consider a bounded open set $E\subset \R^d$, and let $\Theta = (\theta: \R\times \Omega\to\Omega, \varphi: D\subset \R \times \Omega\times E\to \R^d)$ be a $C^1$ random dynamical system such that for all $x\in E$ and almost all $\omega \in \Omega$, we have that $D_{\omega,x}:= \{t\in\R: (t,\omega,x) \in D\}$ is an interval containing $0$ in its interior.

Consider the linear random dynamical system $(\Theta, \Phi)$, where $\Phi(t, \omega, x) \in \R^{d\times d}$ is given by $\Phi(t, \omega, x) = \frac{\partial \varphi}{\partial x} (t, \omega, x)$.
As before, we consider killing at the boundary $\partial E$. We define the hitting times of the boundary in forward and backward time $T^+: \Omega \times E \to \mathbb{R}^+$ and $T^-: \Omega  \times E \to \mathbb{R}^-$ by
\begin{equation} \label{stoppingtimes}
T^+(\omega,x) := \inf \big\{ t > 0 : \varphi(t, \omega,x) \in \partial E \big\} \quad \mbox{ and } \quad  T^-(\omega,x) := \sup \big\{ t < 0: \varphi(t, \omega,x) \in \partial E \big\}\,.
\end{equation}
We assume that $T^+(\cdot, x) < \infty$ and $T^-(\cdot, x) > - \infty$ almost surely for all $x \in E$ and that for all $x\in E$, we have
$$\mathbb{P} \big(T^+(\cdot, x) > t\big) > 0 \text{ for all } t \geq 0 \quad \text{ and } \quad \mathbb{P} \big(T^-(\cdot, x) < t\big) > 0 \text{ for all } t \leq 0\,.$$
Note that $\big(T^-(\omega,x),T^+(\omega,x)\big)\subset D_{\omega,x}$.
To avoid ambiguities, we will write $(\Theta, \Phi, T^+, T^-)$ for the whole system.

We assume a boundedness condition for $\Phi$ on compact time intervals, which is automatically fulfilled if $\Phi$ is the linearization of \eqref{MultidimSDE}.
\begin{assumption}\label{assds}
  For all $t^*>0$, there exists $M>0$ such that for almost all $\omega\in\Omega$, we have
  \begin{displaymath}
    \|\Phi(t,\omega,x)\|\le M \fa x\in E \mbox{ and } t\in \big[\max\{-t^*,T^-(\omega,x)\},\min\{t^*,T^+(\omega,x)\}\big]\,.
  \end{displaymath}
\end{assumption}

First, we define invariant projectors for this setting.
\begin{definition}[Invariant projector] \label{projector_killed}
  Consider the linear random dynamical system $(\Theta, \Phi)$ with stopping times $T^+, T^-$ as given in~\eqref{stoppingtimes}.
  An \emph{invariant projector} $P$ for $(\Theta, \Phi, T^+, T^-)$ is a measurable function $P: \Omega \times E \to \mathbb{R}^{d \times d}$ such that for almost all $\omega \in \Omega$ and all $x \in E$, we have
  \begin{enumerate}[(i)]
  \item $P(\omega,x) = P(\omega,x)^2$,
  \item $P(\Theta_t (\omega,x)) \Phi(t, \omega,x) = \Phi(t, \omega,x)P(\omega,x) \text{ for all } T^-(\omega,x) < t < T^+(\omega,x)$,
  \item and there exists an $r\in\{0,\dots, d\}$ such that for almost all $\omega\in \Omega$ and all $x\in E$, $\rk(P(\omega,x))$ is equal to~$r$.
  \end{enumerate}
\end{definition}

Note that in \cite[Proposition 2.1]{cdlr16}, (iii) follows from measurability of the invariant projector if the random dynamical system is not absorbed at the boundary $\partial E$.

We denote the null space and range of $P$ by
\begin{align*}
\mathcal{N}(P) &= \big\{ (\omega,x,v) \in \Omega \times E \times \mathbb{R}^d\,:\, P(\omega,x)v=0\big\}\,, \\
\mathcal{R}(P) &= \big\{ (\omega,x,v) \in \Omega \times E \times \mathbb{R}^d\,:\, P(\omega,x)w=v \text{ for some } w \in \mathbb{R}^d\big\}\,.
\end{align*}

Due to property (iii) of Definition~\ref{projector_killed}, we define
$$ \rk P := \dim \mathcal{R}(P) := r \quad \mbox{ and } \quad \dim \mathcal{N}(P) := d-r\,.$$

We now give the following definition of an exponential dichotomy for the system with absorption at the boundary.
\begin{definition}[Exponential dichotomy]
Consider the linear random dynamical system  $(\Theta, \Phi)$ with stopping times $T^+, T^-$, and let $\gamma \in \mathbb{R}$ and $P_{\gamma}$ be an invariant projector for $(\Theta, \Phi, T^+, T^-)$.
Then $(\Theta, \Phi, T^+, T^-)$ is said to admit an \emph{exponential dichotomy} with growth rate $\gamma$, constants $\alpha > 0$, $K \geq 1$ and projector $P_{\gamma}$ if for almost all $\omega \in \Omega$ and all $x \in E$
\begin{align*}
\| \Phi(t, \omega,x)P_{\gamma}(\omega,x) \| &\leq K e^{(\gamma - \alpha)t} \fa 0 \leq t < T^+(\omega,x)\,, \\
\| \Phi(t, \omega,x)(\Id - P_{\gamma}(\omega,x)) \| &\leq K e^{(\gamma + \alpha)t} \fa 0 \geq t > T^-(\omega,x)\,.
\end{align*}
\end{definition}

We say that $(\Theta, \Phi, T^+, T^-)$ admits an exponential dichotomy with growth rate $\infty$ if there exists a $\gamma \in \mathbb{R}$ such that $(\Theta, \Phi, T^+, T^-)$ admits an exponential dichotomy with growth rate $\gamma$ and projector $P_{\gamma} = \Id$.
Analogously, we say that $(\Theta, \Phi, T^+, T^-)$ admits an exponential dichotomy with growth rate $-\infty$ if there exists a $\gamma \in \mathbb{R}$ such that $(\Theta, \Phi, T^+, T^-)$ admits an exponential dichotomy with growth rate $\gamma$ and projector $P_{\gamma} = 0$. We write $ \overline{\mathbb{R}} = \mathbb{R} \cup \{-\infty, \infty\}$.

Analogously to \cite[Lemma~2.4]{cdlr16}, directly from the definitions, the following observation follows.
\begin{lemma} \label{P_infty}
Suppose that $(\Theta, \Phi, T^+, T^-)$ admits an exponential dichotomy with growth rate $\gamma$ and projector $P_{\gamma}$. Then the following statements are satisfied:
\begin{enumerate}[(i)]
\item If $P_{\gamma} = \Id $ almost surely, then $(\Theta, \Phi, T^+, T^-)$ admits an exponential dichotomy with growth rate $\zeta$ and invariant projector $P_{\zeta} = \Id$ for all $ \zeta > \gamma$.
\item If $P_{\gamma} = 0 $ almost surely, then $(\Theta, \Phi, T^+, T^-)$ admits an exponential dichotomy with growth rate $\zeta$ and invariant projector $P_{\zeta} = 0$ for all $ \zeta < \gamma$.

\end{enumerate}

\end{lemma}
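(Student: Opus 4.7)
The plan is to verify each assertion directly from the definition of exponential dichotomy, using that in each case one of the two projectors $P_\zeta$ or $\Id - P_\zeta$ vanishes, so one of the two dichotomy inequalities becomes vacuous, and the other can be rewritten with the new rate after a cosmetic adjustment of the dichotomy constant $\alpha$.

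For (i), I would first note that $P_\zeta := \Id$ is an invariant projector for $(\Theta,\Phi,T^+,T^-)$: it is idempotent, commutes trivially with $\Phi(t,\omega,x)$, and has constant rank $d$, so (i)--(iii) of Definition~\ref{projector_killed} hold. The second dichotomy inequality then involves $\Id - P_\zeta = 0$ and is automatically satisfied. For the first inequality, given any $\zeta > \gamma$ I would set $\alpha' := \alpha + (\zeta - \gamma) > 0$, so that $\zeta - \alpha' = \gamma - \alpha$. The hypothesis $P_\gamma = \Id$ gives, for almost all $\omega$ and all $x \in E$,
\begin{equation*}
\|\Phi(t,\omega,x)P_\zeta(\omega,x)\| = \|\Phi(t,\omega,x)\| \le K e^{(\gamma-\alpha)t} = K e^{(\zeta-\alpha')t}
\end{equation*}
for all $0 \le t < T^+(\omega,x)$, which is precisely the required estimate for the exponential dichotomy with growth rate $\zeta$, projector $P_\zeta = \Id$, and constants $K$ and $\alpha'$.

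For (ii), I would argue symmetrically: $P_\zeta := 0$ is an invariant projector of rank $0$, so the first dichotomy inequality is trivial. From $P_\gamma = 0$ almost surely, the second inequality of the original dichotomy reduces to $\|\Phi(t,\omega,x)\| \le K e^{(\gamma+\alpha)t}$ for $T^-(\omega,x) < t \le 0$. Given $\zeta < \gamma$, I would set $\alpha' := \alpha + (\gamma - \zeta) > 0$ so that $\zeta + \alpha' = \gamma + \alpha$; then, since $\Id - P_\zeta = \Id$, the same inequality immediately reads $\|\Phi(t,\omega,x)(\Id - P_\zeta(\omega,x))\| \le K e^{(\zeta+\alpha')t}$, which is the required bound.

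The only subtlety is sign bookkeeping: because $t \le 0$ in the backward estimate and $t \ge 0$ in the forward estimate, increasing or decreasing the exponent has opposite effects in the two cases, and this is exactly why (i) forces $\zeta > \gamma$ while (ii) forces $\zeta < \gamma$. Aside from this, the proof is a direct unpacking of the definitions, with no real obstacle.
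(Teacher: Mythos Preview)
Your proof is correct and matches the paper's approach: the paper does not spell out an argument but simply notes that the lemma follows directly from the definitions, which is exactly what you do by checking that one dichotomy inequality becomes vacuous and rewriting the other with the shifted constant $\alpha'$.
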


Finally, we define the dichotomy spectrum in our setting.
\begin{definition}[Dichotomy spectrum] \label{dich_spectrum_killed}
Consider the linear random dynamical system $(\Theta, \Phi)$ with stopping times $T^+, T^-$. Then its \emph{dichotomy spectrum} is defined by
$$  \Sigma := \big\{ \gamma \in \overline{\mathbb{R}}: (\Theta, \Phi, T^+, T^-) \text{ does not admit an exponential dichotomy with growth rate } \gamma \big\}\,. $$
The corresponding \emph{resolvent set} is defined by $ \rho := \overline{\mathbb{R}} \setminus \Sigma$.
\end{definition}
In Theorem~\ref{Dichtheorem} below, we characterize the dichotomy spectrum as a disjoint union of at least one and at most $d$ closed intervals. The proof uses a couple of lemmas about the resolvent set that are shown in the following, similarly to \cite{cdlr16} and \cite{mr10}. First, we show that the ranks of invariant projectors are monotonically increasing with respect to the growth rate.
\begin{lemma} \label{gamma1gamma2}
Consider the resolvent set $\rho$ of the linear random dynamical system $(\Theta, \Phi, T^+, T^-)$, and let $\gamma_1, \gamma_2 \in \rho \cap \mathbb{R}$ such that $\gamma_1 \leq \gamma_2$. Choose invariant projectors $P_{\gamma_1}$ and $P_{\gamma_2}$ for the corresponding exponential dichotomies with growth rates $\gamma_1$ and $\gamma_2$. Then we have $\rk P_{\gamma_1} \leq \rk P_{\gamma_2}$. In particular, if $\gamma_1 = \gamma_2$, then $\rk P_{\gamma_1} = \rk P_{\gamma_2}$.
\end{lemma}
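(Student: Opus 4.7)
The plan is to argue by contradiction, using a dimension count to produce a single vector that is simultaneously forced to contract slowly (via the $\gamma_1$-dichotomy) and expand fast (via the $\gamma_2$-dichotomy). This inequality constrains the forward hitting time $T^+$ to a deterministic upper bound, which contradicts the assumption $\mathbb{P}(T^+(\cdot,x)>t)>0$ for every $t\ge 0$.

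The preliminary step I would carry out is a one-sided ``kernel grows fast'' estimate: for any $v\in\mathcal{N}(P_{\gamma_2}(\omega,x))$ and $0\le t<T^+(\omega,x)$,
\begin{equation*}
\|\Phi(t,\omega,x)v\|\ \ge\ \tfrac{1}{K_2}\,e^{(\gamma_2+\alpha_2)t}\,\|v\|.
\end{equation*}
This is obtained by combining three ingredients: invariance of $P_{\gamma_2}$, which yields $\Phi(t,\omega,x)v\in\mathcal{N}(P_{\gamma_2}(\Theta_t(\omega,x)))$; the cocycle identity $\Phi(-t,\Theta_t(\omega,x))\,\Phi(t,\omega,x)=\Id$; and the backward-time bound in the dichotomy definition applied at $\Theta_t(\omega,x)$ with the negative time $-t$, which is admissible because $T^-(\Theta_t(\omega,x))\le -t$ as the reverse trajectory stays in $E$ until it reaches $(\omega,x)$.

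Now suppose for contradiction that $r_1:=\rk P_{\gamma_1}>\rk P_{\gamma_2}=:r_2$. Fix an $x\in E$ and pick $\omega$ in the full-measure set on which both invariant projectors attain these ranks. Since
\begin{equation*}
\dim\mathcal{R}(P_{\gamma_1})(\omega,x)+\dim\mathcal{N}(P_{\gamma_2})(\omega,x)=r_1+(d-r_2)>d,
\end{equation*}
the two subspaces intersect in some nonzero $v=v(\omega,x)$. Applying the upper bound for $\mathcal{R}(P_{\gamma_1})$ and the lower bound just established for $\mathcal{N}(P_{\gamma_2})$ to this $v$ gives, for all $0\le t<T^+(\omega,x)$,
\begin{equation*}
\tfrac{1}{K_2}\,e^{(\gamma_2+\alpha_2)t}\|v\|\ \le\ \|\Phi(t,\omega,x)v\|\ \le\ K_1\,e^{(\gamma_1-\alpha_1)t}\|v\|.
\end{equation*}
Since $\gamma_2\ge\gamma_1$ and $\alpha_1,\alpha_2>0$, this forces $t\le\tau^*:=\log(K_1K_2)/\bigl((\gamma_2-\gamma_1)+\alpha_1+\alpha_2\bigr)$, hence $T^+(\omega,x)\le\tau^*$ on a set of full probability. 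This contradicts $\mathbb{P}(T^+(\cdot,x)>\tau^*)>0$. The final claim that $\gamma_1=\gamma_2$ implies equal ranks follows by applying the inequality both with the original and with the swapped order of the two growth rates.

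The main obstacle is the subtlety that $T^+$ is almost surely finite, so the usual asymptotic argument ($t\to\infty$) does not apply; what saves the day is that the incompatibility of the two exponential rates already manifests at a \emph{finite}, deterministic time $\tau^*$, after which the assumption $\mathbb{P}(T^+>\tau^*)>0$ can be invoked. One minor point requiring care is that the constants $K_i,\alpha_i$ for the two dichotomies may differ; this is harmless because $\tau^*$ depends only on $\gamma_2-\gamma_1,\alpha_1,\alpha_2,K_1,K_2$, not on $(\omega,x)$, and $v$ need not be chosen measurably — its existence pointwise on the full-measure rank set suffices.
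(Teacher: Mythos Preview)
Your proof is correct and follows essentially the same approach as the paper. Both arguments exploit the incompatibility between the forward contraction on $\mathcal{R}(P_{\gamma_1})$ and the forward expansion on $\mathcal{N}(P_{\gamma_2})$ (the latter obtained from the backward dichotomy bound via the cocycle identity at $\Theta_t(\omega,x)$), and both resolve the finite-survival-time issue by noting that the contradiction already occurs at a deterministic finite time, so that the standing hypothesis $\mathbb{P}(T^+(\cdot,x)>t)>0$ supplies a point where the argument can be run. The only difference is organizational: the paper first fixes $t^*$ with $K_1K_2e^{-\alpha_2 t^*}<1$, picks $\omega$ with $T^+(\omega,x)>t^*$, and shows directly that $\mathcal{R}(P_{\gamma_1}(\omega,x))\cap\mathcal{N}(P_{\gamma_2}(\omega,x))=\{0\}$ there, whereas you assume $\rk P_{\gamma_1}>\rk P_{\gamma_2}$, produce the nonzero intersection vector on a full-measure set, and derive the uniform bound $T^+\le\tau^*$ almost surely.
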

\begin{proof}
Let $K_1 \geq 1$ and $K_2 \geq 1, \alpha_2 > 0$ be the corresponding constants for the exponential dichotomies with $\gamma_1$ and $\gamma_2$ respectively. Choose $t^*>0$ large enough such that $K_2 K_1 e^{- \alpha_2 t^*} < 1$, and for fixed $x \in E$, choose $\omega\in \Omega_x = \big\{ \omega \in \Omega \,:\, T^+(\omega,x) > t^*\big\}$. Now let $v(\omega, x)\in \mathcal{N}(P_{\gamma_2}(\omega,x)) \cap \mathcal{R}(P_{\gamma_1}(\omega,x))$ and assume that $v(\omega,x) \neq 0$.  We observe that for all $t \in ( t^* , T^{+}(\omega,x))$, we have
\begin{align*}
\| v(\omega,x) \| &= \| \Phi(-t, \Theta_{t} (\omega,x)) \Phi(t, \omega,x) (\Id - P_{\gamma_2}(\omega,x))v(\omega,x) \| \\
&= \| \Phi(-t, \Theta_{t} (\omega,x)) (\Id - P_{\gamma_2}(\Theta_{t} (\omega,x)))\Phi(t, \omega,x) v(\omega,x) \| \\
&  \leq K_2 e^{-(\gamma_2 + \alpha_2)t} \| \Phi(t, \omega,x) v(\omega,x) \|\leq K_2 K_1 e^{-(\gamma_2 + \alpha_2)t} e^{\gamma_1 t} \|v(\omega,x)\| \\
 &\leq K_2 K_1 e^{- \alpha_2 t} \|v(\omega,x)\| < \|v(\omega,x)\|\,,
\end{align*}
which is a contradiction. Hence,
$$ \mathcal{N}(P_{\gamma_2}(\omega,x)) \cap \mathcal{R}(P_{\gamma_1}(\omega,x)) = \{0\} \fa  x \in E \mbox{ and }\omega \in \Omega_x\,.$$
Since $\Omega_x$ has positive probability and the dimensions of the ranges and null spaces of invariant projectors are constant almost surely, we can deduce that
\begin{equation*}
0 = \dim ( \mathcal{R}(P_{\gamma_1}) \cap \mathcal{N}(P_{\gamma_2}) ) = \rk P_{\gamma_1} + \dim \mathcal{N}(P_{\gamma_2}) - \dim ( \mathcal{R}(P_{\gamma_1}) + \mathcal{N}(P_{\gamma_2}))\,.
\end{equation*}
This is used to observe that
\begin{equation*}
\rk P_{\gamma_2} = d - \dim \mathcal{N}P_{\gamma_2}
= \rk P_{\gamma_1} + d - \dim ( \mathcal{R}P_{\gamma_1} + \mathcal{N}P_{\gamma_2}) \geq \rk P_{\gamma_1}\,.
\end{equation*}
which shows the first statement of the lemma. The second statement is an immediate consequence.
\end{proof}
We proceed with showing that the resolvent set is open in $\mathbb{R}$.
\begin{lemma} \label{DS_closed}
Consider the resolvent set $\rho$ of $(\Theta, \Phi, T^+, T^-)$. Then for all $ \gamma \in \rho \cap \mathbb{R}$, there is an $\epsilon > 0$ such that $B_{\epsilon} (\gamma) \subset \rho \cap \mathbb{R}$, which means that $\rho \cap \mathbb{R}$ is an open set. Furthermore, we have $\rk P_{\zeta} = \rk P_{\gamma}$ for all $\zeta \in B_{\epsilon}(\gamma)$ and every invariant projector $P_{\gamma}$ and $P_{\zeta}$ of the exponential dichotomies of $(\Theta, \Phi, T^+, T^-)$ with growth rates $\gamma$ and $\zeta$, respectively.
\end{lemma}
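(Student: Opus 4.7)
My plan is to show that the same projector $P_\gamma$ that realises the dichotomy at $\gamma$ also realises a dichotomy at every sufficiently close $\zeta$, which gives both openness of $\rho\cap\mathbb R$ and, via Lemma~\ref{gamma1gamma2}, the rank statement.

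First, fix $\gamma\in\rho\cap\mathbb R$ and let $K\ge 1$, $\alpha>0$ and invariant projector $P_\gamma$ realise the exponential dichotomy at growth rate $\gamma$. Choose $\epsilon\in(0,\alpha)$ and set $\alpha':=\alpha-\epsilon>0$. For any $\zeta\in B_\epsilon(\gamma)$, I would simply rewrite the bounds coming from the $\gamma$-dichotomy: for almost every $\omega$, every $x\in E$, and every $0\le t<T^+(\omega,x)$,
\begin{equation*}
\|\Phi(t,\omega,x)P_\gamma(\omega,x)\|\le K e^{(\gamma-\alpha)t}=K e^{(\zeta-\alpha')t}e^{(\gamma-\zeta-\epsilon)t}\le K e^{(\zeta-\alpha')t},
\end{equation*}
since $\gamma-\zeta-\epsilon\le 0$ and $t\ge 0$. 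Analogously, for $0\ge t>T^-(\omega,x)$,
\begin{equation*}
\|\Phi(t,\omega,x)(\Id-P_\gamma(\omega,x))\|\le K e^{(\gamma+\alpha)t}\le K e^{(\zeta+\alpha')t},
\end{equation*}
because now $t\le 0$ and $\gamma+\alpha-(\zeta+\alpha')=\gamma-\zeta+\epsilon\ge 0$. Hence $P_\gamma$ is an invariant projector witnessing an exponential dichotomy at growth rate $\zeta$ with constants $K$ and $\alpha'$, so $\zeta\in\rho\cap\mathbb R$. This establishes $B_\epsilon(\gamma)\subset\rho\cap\mathbb R$.

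For the rank claim, take any $\zeta\in B_\epsilon(\gamma)$ and any invariant projector $P_\zeta$ realising the dichotomy at growth rate $\zeta$. By the construction above, $P_\gamma$ is also such a projector at growth rate $\zeta$. Applying the second assertion of Lemma~\ref{gamma1gamma2} to the pair $(\zeta,\zeta)$ with the two projectors $P_\gamma$ and $P_\zeta$ yields $\rk P_\zeta=\rk P_\gamma$, as required.

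I do not foresee a serious obstacle: this is essentially a robustness argument for exponential dichotomies adapted to the killed setting. The only minor care needed is tracking the signs in the two estimates above, and observing that the restriction of the time intervals to $[0,T^+(\omega,x))$ and $(T^-(\omega,x),0]$ is preserved verbatim when passing from $\gamma$ to $\zeta$, so no further assumptions beyond Assumption~\ref{assds} enter the argument.
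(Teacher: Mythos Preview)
Your proof is correct and follows essentially the same route as the paper: reuse the projector $P_\gamma$ at nearby growth rates by absorbing the shift $\gamma-\zeta$ into the gap $\alpha$, then invoke Lemma~\ref{gamma1gamma2} for the rank equality. The only cosmetic difference is that the paper fixes $\epsilon=\tfrac{1}{2}\alpha$, whereas you allow any $\epsilon\in(0,\alpha)$ and write out the sign-tracking in slightly more detail.
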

\begin{proof}
Let  $\gamma \in \rho \cap \mathbb{R}$ and $\alpha, K$ be the constants for the exponential dichotomy with growth rate $\gamma$ and invariant projector $P_{\gamma}$. Set $\epsilon: = \frac{1}{2}\alpha$ and choose any $\zeta \in B_{\epsilon}(\gamma)$. Then for almost all $\omega \in \Omega$ and all $x \in E$,
\begin{align*}
\| \Phi(t, \omega,x)P_{\gamma}(\omega,x) \| &\leq K e^{(\zeta - \frac{1}{2}\alpha)t}\fa 0 \leq t < T^+(\omega,x)\,, \\
\| \Phi(t, \omega,x)(\Id - P_{\gamma}(\omega,x)) \| &\leq K e^{(\zeta + \frac{1}{2}\alpha)t}\fa 0 \geq t > T^-(\omega,x)\,.
\end{align*}
Hence, $P_{\gamma}$ is an invariant projector for the exponential dichotomy with growth rate $\zeta$ and we have $\rk P_{\zeta} = \rk P_{\gamma}$ for any other such invariant projector by Lemma~\ref{gamma1gamma2}.
\end{proof}
The last ingredient for proving Theorem~\ref{Dichtheorem} is the following lemma.
\begin{lemma} \label{rankequal}
Consider the resolvent set $\rho$ of $(\Theta, \Phi, T^+, T^-)$ and let $\gamma_1, \gamma_2 \in \rho \cap \mathbb{R}$ such that $\gamma_1 < \gamma_2$. Choose invariant projectors $P_{\gamma_1}$ and $P_{\gamma_2}$ for the exponential dichotomies with growth rates $\gamma_1$ or $\gamma_2$, respectively. Then $[\gamma_1, \gamma_2] \subset \rho$ if and only if $\rk P_{\gamma_1} = \rk P_{\gamma_2}$.
\end{lemma}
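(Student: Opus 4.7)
The plan is to prove the two implications separately: the direction ``$\Rightarrow$'' reduces to a topological connectedness argument, while ``$\Leftarrow$'' requires constructing a Whitney-type invariant projector at every intermediate growth rate, whose \emph{uniform} boundedness is the main technical obstacle.

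For ``$\Rightarrow$'', assume $[\gamma_1,\gamma_2]\subset\rho$. By Lemma~\ref{DS_closed}, around each $\gamma\in[\gamma_1,\gamma_2]$ there is a neighbourhood in $\rho$ on which the rank of any invariant projector of the corresponding exponential dichotomy is constant. Hence $\gamma\mapsto\rk P_\gamma$ is locally constant on the connected set $[\gamma_1,\gamma_2]$; since it takes values in the discrete set $\{0,\dots,d\}$, it is globally constant, and in particular $\rk P_{\gamma_1}=\rk P_{\gamma_2}$.

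For ``$\Leftarrow$'', set $r:=\rk P_{\gamma_1}=\rk P_{\gamma_2}$. The argument in the proof of Lemma~\ref{gamma1gamma2} already shows $\mathcal{R}(P_{\gamma_1}(\omega,x))\cap\mathcal{N}(P_{\gamma_2}(\omega,x))=\{0\}$ almost surely, so the rank equality and a dimension count force the a.s.\ direct sum $\mathcal{R}(P_{\gamma_1}(\omega,x))\oplus\mathcal{N}(P_{\gamma_2}(\omega,x))=\mathbb{R}^d$. I would then let $Q(\omega,x)$ be the unique projector with range $\mathcal{R}(P_{\gamma_1}(\omega,x))$ and kernel $\mathcal{N}(P_{\gamma_2}(\omega,x))$; measurability follows from that of $P_{\gamma_1}$ and $P_{\gamma_2}$, $\Phi$-invariance from the invariance under the cocycle of the two constituent subspaces, and $\rk Q=r$ by construction. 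For any $\zeta\in[\gamma_1,\gamma_2]$, the subspace estimates transfer: on $\mathcal{R}(Q)=\mathcal{R}(P_{\gamma_1})$, the bound $\|\Phi(t,\omega,x)v\|\le K_1 e^{(\gamma_1-\alpha_1)t}\|v\|$ dominates $K_1 e^{(\zeta-\alpha)t}\|v\|$ as long as $\alpha\le\zeta-\gamma_1+\alpha_1$, and on $\mathcal{N}(Q)=\mathcal{N}(P_{\gamma_2})$ the backward bound $\|\Phi(t,\omega,x)v\|\le K_2 e^{(\gamma_2+\alpha_2)t}\|v\|$ is dominated by $K_2 e^{(\zeta+\alpha)t}\|v\|$ for $t\le 0$ as long as $\alpha\le\gamma_2+\alpha_2-\zeta$; both constraints admit a common positive $\alpha$.

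The main obstacle will be to promote these restricted subspace estimates to operator-norm bounds $\|\Phi(t,\omega,x)Q(\omega,x)\|\le K e^{(\zeta-\alpha)t}$ with $K$ independent of $(\omega,x)$, as the definition of exponential dichotomy demands. Since $\|\Phi(t)Qv\|\le K_1\,\|Q(\omega,x)\|\,e^{(\gamma_1-\alpha_1)t}\,\|v\|$, this reduces to establishing $\sup_{(\omega,x)}\|Q(\omega,x)\|<\infty$, equivalently a uniform lower bound on the angle between $\mathcal{R}(P_{\gamma_1}(\omega,x))$ and $\mathcal{N}(P_{\gamma_2}(\omega,x))$. I would argue by contradiction: a sequence $(\omega_n,x_n)$ along which the angle collapses yields unit vectors $v_1^{(n)}\in\mathcal{R}(P_{\gamma_1}(\omega_n,x_n))$ and $v_2^{(n)}\in\mathcal{N}(P_{\gamma_2}(\omega_n,x_n))$ with $w_n:=v_1^{(n)}-v_2^{(n)}\to 0$. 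The forward decay on $\mathcal{R}(P_{\gamma_1})$ at rate $\gamma_1-\alpha_1$ combined with the forward growth on $\mathcal{N}(P_{\gamma_2})$ at rate $\gamma_2+\alpha_2$ (derived from the backward estimate at $\Theta_{t^*}$ via the cocycle identity) forces $\|\Phi(t^*,\omega_n,x_n)w_n\|$ to exceed a fixed positive constant for any sufficiently large $t^*$, whereas Assumption~\ref{assds} gives $\|\Phi(t^*,\omega_n,x_n)w_n\|\le M\|w_n\|\to 0$ on the positive-probability event $\{T^+(\omega_n,x_n)>t^*\}$; this is the desired contradiction. With $\sup\|Q\|<\infty$ established, setting $K:=\max\{K_1,K_2\}\sup_{(\omega,x)}\|Q(\omega,x)\|$ and $\alpha>0$ as above yields both operator-norm estimates, so $Q$ is the invariant projector of an exponential dichotomy at growth rate $\zeta$; since $\zeta\in[\gamma_1,\gamma_2]$ was arbitrary, $[\gamma_1,\gamma_2]\subset\rho$.
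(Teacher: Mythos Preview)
Your ``$\Rightarrow$'' direction is correct and essentially the same connectedness argument as the paper's.

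Your ``$\Leftarrow$'' direction has the right overall shape but contains a genuine gap concerning \emph{short survival times}. You assert that the proof of Lemma~\ref{gamma1gamma2} gives $\mathcal{R}(P_{\gamma_1}(\omega,x))\cap\mathcal{N}(P_{\gamma_2}(\omega,x))=\{0\}$ almost surely, but that proof only establishes this on the positive-probability set $\Omega_x=\{T^+(\cdot,x)>t^*\}$; there is no reason for the intersection to be trivial when $T^+(\omega,x)\le t^*$, so your projector $Q$ need not even be defined there. The same problem recurs in your angle argument: the contradiction requires applying $\Phi(t^*,\omega_n,x_n)$, which is only possible when $T^+(\omega_n,x_n)>t^*$. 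Your phrase ``on the positive-probability event $\{T^+(\omega_n,x_n)>t^*\}$'' conflates a deterministic sequence with a random variable; nothing prevents the sequence along which $\|Q(\omega_n,x_n)\|\to\infty$ from living entirely inside $\{T^+\le t^*\}$.

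The paper resolves this by partitioning $\Omega\times E$ into the $\Theta$-invariant set $\widehat W:=\{T^+-T^-\le t^*\}$ and its complement. On $(\Omega\times E)\setminus\widehat W$ every trajectory passes, after a time-shift of magnitude at most $t^*$, through a point with $T^+>t^*$; invariance of $\mathcal{R}(P_{\gamma_1})$ and $\mathcal{N}(P_{\gamma_2})$ then transports the trivial intersection (and, via Assumption~\ref{assds} to control the conjugation by $\Phi$, also your angle bound) to the whole complement. On $\widehat W$ one abandons $Q$ altogether and takes $P\equiv P_{\gamma_1}$: since both $|T^+|$ and $|T^-|$ are bounded by $t^*$ there, Assumption~\ref{assds} lets one absorb the mismatch in growth rates into an enlarged constant $K$. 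Your uniform-angle argument is sound once restricted to $\{T^+>t^*\}$; what is missing is precisely this $\Theta$-invariant splitting and the separate treatment of short trajectories.
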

\begin{proof}
First assume that $[\gamma_1, \gamma_2] \subset \rho$, and suppose for contradiction that $\rk P_{\gamma_1} \neq \rk P_{\gamma_2}$. Choosing invariant projectors $P_{\gamma}$ for exponential dichotomies with growth rates $\gamma \in (\gamma_1, \gamma_2)$, we define
$$ \zeta_0 := \sup \big\{ \zeta \in [\gamma_1, \gamma_2]\,:\, \rk P_{\zeta} \neq \rk P_{\gamma_2} \big\}\,.$$
However, according to Lemma~\ref{DS_closed} there is an $\epsilon > 0$ such that $ \rk P_{\zeta} = \rk P_{\zeta_0}$ for all $\zeta \in B_{\epsilon}(\zeta_0)$, contradicting the definition of $\zeta_0$. Hence, we have shown the first implication.

Assume now that $\rk P_{\gamma_1} = \rk P_{\gamma_2}$. We have already seen in the proof of Lemma~\ref{gamma1gamma2} that
\begin{equation}\label{rel4}
  \mathcal{N}(P_{\gamma_2}(\omega,x)) \cap \mathcal{R}(P_{\gamma_1}(\omega,x)) = \{0\} \fa  x \in E \mbox{ and } \omega \in \Omega_x\,,
\end{equation}
where $\Omega_x = \{ \omega \in \Omega \,:\, T^+(\omega,x) > t^*\}$ has positive probability. Define $\widetilde W:= \{(\omega,x): \omega\in\Omega_x\}$, and note that \eqref{rel4} holds for all $(\omega,x)\in \widetilde W$.

Define $\widehat W:=\{(\omega,x): T^+(\omega, x)-T^-(\omega,x) \le t^*\}$. It is clear that $\widehat W$ and $(E\times \Omega) \setminus \widehat W$ are invariant with respect to $\Theta$, and it is easy to see that \eqref{rel4} holds for all $(\omega,x)\in (E\times \Omega) \setminus \widehat W$ (note that $\widetilde W\subset (E\times \Omega) \setminus \widehat W$).
Due to $\rk P_{\gamma_1} = \rk P_{\gamma_2}$, we can define an projector $P$ on $(E\times \Omega) \setminus \widehat W$ such that
$\mathcal{N}(P(\omega,x)) = \mathcal{N}(P_{\gamma_2}(\omega,x))$ and $\mathcal{R}(P(\omega,x)) = \mathcal{R}(P_{\gamma_1}(\omega,x))$ for all $(\omega,x)\in (E\times \Omega) \setminus \widehat W$.
This means that for constants $K_1,\alpha_1, K_2, \alpha_2$ we have for all $(\omega,x)\in (E\times \Omega) \setminus \widehat W$ that
\begin{align*}
\| \Phi(t, \omega,x)P(\omega,x) \| &\leq K_1 e^{(\gamma_1 - \alpha_1)t} \fa 0 \leq t < T^+(\omega,x)\,, \\
\| \Phi(t, \omega,x)(\Id - P(\omega,x)) \| &\leq K_2 e^{(\gamma_2 + \alpha_2)t} \fa  0 \geq t > T^-(\omega,x)\,.
\end{align*}
Setting $K:= \max \{K_1,K_2\}$ and $\alpha:= \min\{\alpha_1, \alpha_2\}$, we obtain for all $\gamma \in [\gamma_1, \gamma_2]$
and $(\omega,x)\in (E\times \Omega) \setminus \widehat W$ that
\begin{align} \label{expdichex}
\begin{array}{r@{\;\,\leq\;\,}l}
\| \Phi(t, \omega,x)P(\omega,x)\| & K e^{(\gamma - \alpha)t} \fa 0 \leq t < T^+(\omega,x)\,, \\
\| \Phi(t, \omega,x)(\Id - P(\omega,x)) \| & K e^{(\gamma + \alpha)t}\fa 0 \geq t > T^-(\omega,x)\,.\end{array}
\end{align}
We define $P \equiv P_{\gamma_1}$ on the remaining part $\widehat W$. Note that due to construction, the projector $P$ is invariant. Note that due to Assumption~\ref{assds}, and by possibly enlarging the constant $K$, the above estimate \eqref{expdichex} holds also on $\widehat{W}$.
This implies that $[\gamma_1, \gamma_2] \subset \rho$.
\end{proof}
For $a \in \R$, we define $[-\infty, a] := (- \infty, a] \cup \{-\infty\}$, $[a,\infty] :=  [a, \infty) \cup \{\infty\}$, $[-\infty, -\infty] := \{-\infty\}$, $[\infty,\infty] :=  \{\infty\}$ and $[-\infty, \infty] := \overline{\R}$.
Analogously to the case without killing \cite[Theorem 3.4]{cdlr16}, we can now prove the following Spectral Theorem.
\begin{theorem}[Spectral Theorem] \label{Dichtheorem}
Consider the linear system $(\Theta, \Phi, T^+, T^-)$ with dichotomy spectrum $\Sigma$. Then there exists an $n \in \{1, \dots, d \}$ such that
$$ \Sigma = [a_1,b_1] \cup \dots \cup [a_n, b_n]\,,$$
where $ - \infty \leq a_1 \leq b_1 < a_2 \leq b_2 < \dots < a_n \leq b_n \leq \infty$.
\end{theorem}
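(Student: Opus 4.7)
The plan is to leverage Lemmas~\ref{gamma1gamma2}, \ref{DS_closed}, and~\ref{rankequal} to decompose the resolvent $\rho\cap\R$ into an ordered sequence of at most $d+1$ open intervals whose invariant projectors have \emph{strictly} increasing ranks; the spectrum $\Sigma$ then emerges as the complementary closed gaps in $\overline{\R}$, augmented by $\pm\infty$ when appropriate.

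First I would use Lemma~\ref{DS_closed} to observe that $\rho\cap\R$ is open in $\R$, hence a disjoint countable union of open intervals on each of which any invariant projector has a constant rank in $\{0,1,\ldots,d\}$. Ordering the components from left to right as $J_0<J_1<\dots<J_k$ and writing $r_i$ for the common rank on $J_i$, Lemma~\ref{gamma1gamma2} gives $r_0\le r_1\le\dots\le r_k$, while strictness $r_{i-1}<r_i$ follows from Lemma~\ref{rankequal}: equality of ranks at two adjacent components would, upon picking $\gamma_{i-1}\in J_{i-1}$ and $\gamma_i\in J_i$, force $[\gamma_{i-1},\gamma_i]\subset\rho$ and thereby collapse the two components into one. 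Pigeonholing then yields $k+1\le d+1$.

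Next I would read off the spectral intervals. Each adjacent pair of components supplies a bounded closed gap $[a_i,b_i]\subset\Sigma\cap\R$ with $a_i=\sup J_{i-1}$ and $b_i=\inf J_i$, providing $k$ intervals in total. To treat $\pm\infty$, I would invoke Lemma~\ref{P_infty}: the component $J_0$ extends to $-\infty$ (equivalently, $-\infty\in\rho$) if and only if $r_0=0$, since only the zero projector admits indefinite leftward extension, and symmetrically $J_k$ extends to $+\infty$ iff $r_k=d$. Whenever these conditions fail, the corresponding point lies in $\Sigma$ and an additional interval $[-\infty,\inf J_0]$ or $[\sup J_k,+\infty]$ appears. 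The degenerate case $\rho\cap\R=\emptyset$ collapses to $\Sigma=[-\infty,\infty]$, a single interval.

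The final step is to verify $1\le n\le d$ by case analysis on $n=k+\mathds{1}_{\{r_0>0\}}+\mathds{1}_{\{r_k<d\}}$: each indicator equal to $1$ removes one of the extremes $0$ or $d$ from the range of $\{r_0,\dots,r_k\}$, so the pigeonhole bound tightens in exactly the right way to deliver $n\le d$ in all four subcases. Positivity follows since $n=0$ would force $k=0$ with $r_0=0=d$, impossible for $d\ge 1$. I expect this endpoint bookkeeping at $\pm\infty$ to be the main obstacle, as the structural backbone of the proof is already supplied by the three preceding lemmas and the real work lies in correctly handling the boundary at infinity.
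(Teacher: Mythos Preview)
Your approach is correct and is essentially the same as the paper's: both arguments rest on Lemmas~\ref{P_infty}, \ref{gamma1gamma2}, \ref{DS_closed} and \ref{rankequal}, and the paper simply defers the pigeonhole bookkeeping for $1\le n\le d$ to \cite[Theorem~3.4]{cdlr16}, whereas you spell it out explicitly via the strictly increasing rank sequence and the endpoint indicators.

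One small point worth tightening: your claimed equivalence ``$J_0$ extends to $-\infty$ if and only if $r_0=0$'' is not a direct consequence of Lemma~\ref{P_infty}, which only supplies the implication $r_0=0\Rightarrow J_0$ unbounded below. The converse (a nontrivial projector cannot persist for arbitrarily negative growth rates) actually requires Assumption~\ref{assds}, which yields a uniform lower bound on $\|\Phi(t,\omega,x)v\|$ and hence rules out dichotomies with $P_\gamma\neq 0$ for $\gamma$ sufficiently negative. Your counting formula $n=k+\mathds 1_{\{r_0>0\}}+\mathds 1_{\{r_k<d\}}$ remains valid either way (the added interval degenerates to $\{-\infty\}$ in the hypothetical edge case), so this does not affect the conclusion, but the justification should cite Assumption~\ref{assds} rather than Lemma~\ref{P_infty} alone.
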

\begin{proof}
Since $\rho \cap \mathbb{R}$ is open according to Lemma~\ref{DS_closed}, the set $\Sigma \cap \mathbb{R}$ is the disjoint union of closed intervals. Furthermore, by Lemma~\ref{P_infty}, $(-\infty, b_1] \subset \Sigma$ implies $[-\infty, b_1] \subset \Sigma$, and $[a_n, \infty) \subset \Sigma$ implies  $[a_n, \infty] \subset \Sigma$.

The fact that $1\leq n \leq d$ is a consequence of Lemma~\ref{rankequal} which can now be derived exactly as in the proof of \cite[Theorem 3.4]{cdlr16}.
\end{proof}

In addition, we can prove an analogue to \cite[Theorem 4.5]{cdlr16} for the case of absorption at the boundary, relating the upper and lower limits of finite-time Lyapunov exponents to the extremal values of the dichotomy spectrum.
%
\begin{theorem}[Supremum and infimum of the dichotomy spectrum] \label{boundaryDichotomy}
Let $\Sigma$ denote the dichotomy spectrum of $(\Theta, \Phi, T^+, T^-)$.  Recall that the finite-time Lyapunov exponent for each $(\omega,x) \in \Omega \times E$ and $v \in \mathbb{R}^d \setminus \{0\}$ is given by
\begin{equation*}
\lambda_v(t,\omega,x) = \frac{1}{t} \ln \frac{ \|\Phi(t,\omega,x) v \|}{\|v\|} \fa t\in\big(0, T^+(\omega,x)\big)\,.
\end{equation*}
Then
\begin{equation}\label{supds}
 \lim_{t \to \infty} \sup_{x\in E}\esssup_{ \{\omega\in\Omega:T^+(\omega,x) > t\}} \sup_{ v \not=0} \lambda_v(t, \omega, x) = \sup \Sigma\,,
\end{equation}
provided that $\sup \Sigma < \infty$ and
\begin{equation}\label{supds2} \lim_{t \to \infty}
\inf_{x\in E}\essinf_{ \{\omega\in\Omega:T^+(\omega,x) > t\}} \inf_{ v \not=0}
\lambda_v(t, \omega, x) = \inf \Sigma
\end{equation}
provided that $\inf \Sigma > -\infty$.
\end{theorem}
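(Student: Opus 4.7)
The plan is to reduce \eqref{supds} and \eqref{supds2} to statements about operator norms of $\Phi$ and $\Phi^{-1}$, establish existence of the limits by Fekete's subadditive lemma, and identify them with $\sup\Sigma$ and $\inf\Sigma$ through matching exponential dichotomies. Note that $\sup_{v\neq 0}\lambda_v(t,\omega,x)=\tfrac{1}{t}\ln\|\Phi(t,\omega,x)\|$ and $\inf_{v\neq 0}\lambda_v(t,\omega,x)=-\tfrac{1}{t}\ln\|\Phi(t,\omega,x)^{-1}\|$, so the two claims concern the large-$t$ behaviour of
$$N(t) := \sup_{x\in E}\esssup_{\{\omega\,:\,T^+(\omega,x)>t\}}\|\Phi(t,\omega,x)\| \quad \text{and} \quad M(t) := \sup_{x\in E}\esssup_{\{\omega\,:\,T^+(\omega,x)>t\}}\|\Phi(t,\omega,x)^{-1}\|,$$
both finite by Assumption~\ref{assds}. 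The cocycle identity $\Phi(t+s,\omega,x)=\Phi(s,\Theta_t(\omega,x))\Phi(t,\omega,x)$, together with the facts that $\varphi(t,\omega,x)\in E$ on $\{T^+(\omega,x)>t+s\}$ and that $\theta_t$ preserves $\mathbb{P}$, yields $N(t+s)\leq N(t)N(s)$ and $M(t+s)\leq M(t)M(s)$; applying Fekete's lemma to $\ln N$ and $\ln M$ gives the limits $\beta:=\lim_t\tfrac{1}{t}\ln N(t)$ and $\beta':=\lim_t\tfrac{1}{t}\ln M(t)$.

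To show $\beta=\sup\Sigma$, I use two matching exponential dichotomies. The hypothesis $\sup\Sigma<\infty$ forces $\infty\notin\Sigma$, which together with Lemma~\ref{gamma1gamma2} forces $P_\gamma=\Id$ for every $\gamma>\sup\Sigma$; the dichotomy bound $\|\Phi(t,\omega,x)\|\leq Ke^{(\gamma-\alpha)t}$ on $0\leq t<T^+(\omega,x)$ gives $\beta\leq\gamma-\alpha$, and letting $\gamma\downarrow\sup\Sigma$ yields $\beta\leq\sup\Sigma$. Conversely, for any $\epsilon>0$, the Fekete characterisation of $\beta$ combined with Assumption~\ref{assds} yields $K$ with $N(t)\leq Ke^{(\beta+\epsilon)t}$ for all $t>0$; with $\gamma:=\beta+2\epsilon$, $\alpha:=\epsilon$, $P_\gamma:=\Id$ (so the backward dichotomy bound is trivial), this is an exponential dichotomy of growth rate $\gamma$, hence $\sup\Sigma\leq\beta+2\epsilon$, and letting $\epsilon\to 0$ closes the argument. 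The argument for $\beta'=-\inf\Sigma$ is dual: $\inf\Sigma>-\infty$ forces $P_\gamma=0$ for every $\gamma<\inf\Sigma$, and the backward bound $\|\Phi(t,\omega,x)\|\leq Ke^{(\gamma+\alpha)t}$ on $T^-(\omega,x)<t\leq 0$ translates, via the cocycle identity $\Phi(-s,\Theta_s(\omega',x'))=\Phi(s,\omega',x')^{-1}$, into a forward bound on $M(s)$; conversely, passing a Fekete bound on $M$ back through the cocycle constructs an exponential dichotomy with $P=0$ and growth rate $-\beta'-2\epsilon$.

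The main technical obstacle is the measure-theoretic care needed to establish submultiplicativity of $N$ and $M$: the shifted noise $\theta_t\omega$ and the driven state $\varphi(t,\omega,x)$ are both $\omega$-dependent, and the esssup of a composition must be shown to respect the individual esssups, which relies on measure-preservation of $\theta_t$ and on measurable selection of the null sets parametrised by $x\in E$. A secondary subtlety arises in \eqref{supds2}, where one must verify that, as $(\omega,x)$ varies over pairs with $s<-T^-(\omega,x)$, the image $\Theta_s(\omega,x)$ exhausts the esssup over $(\omega',x')$ with $s<T^+(\omega',x')$ in a measure-preserving way, so that backward $\|\Phi\|$-bounds and forward $\|\Phi^{-1}\|$-bounds are genuinely interchangeable.
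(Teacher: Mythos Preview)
Your proposal is correct and follows essentially the same strategy as the paper: establish existence of the limit via subadditivity (you via Fekete applied to $\ln N$, the paper via subadditivity of $t\mapsto t\sup\lambda_v$ directly), then pin the limit to $\sup\Sigma$ by matching dichotomy bounds in both directions (the paper phrases both directions as contradiction arguments rather than direct inequalities, but the content is identical). One small correction: to force $P_\gamma=\Id$ for every $\gamma>\sup\Sigma$ you need Lemma~\ref{rankequal} (or the Spectral Theorem), not Lemma~\ref{gamma1gamma2}, since monotonicity of ranks alone only yields $\rk P_\gamma\le d$; the paper sidesteps this by simply asserting the bound $\|\Phi(t,\omega,x)\|\le Ke^{t\sup\Sigma}$ without further justification, and it likewise glosses over with ``analogously'' the measure-theoretic dualisation for \eqref{supds2} that you rightly flag as the genuine subtlety.
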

\begin{proof}
By definition of $\lambda_v(t, \omega, x)$ we get for all $t,s \geq 0$
\begin{align*}
 (t+s)  &  \sup_{x\in E}\esssup_{ \{\omega\in\Omega:T^+(\omega,x) > t+s\}} \sup_{ v \not=0} \lambda_v(t+s, \omega, x) \\
&\leq t  \sup_{x\in E}\esssup_{ \{\omega\in\Omega:T^+(\omega,x) > t\}} \sup_{ v \not=0}\lambda_v(t, \omega, x) + s  \sup_{x\in E}\esssup_{ \{\omega\in\Omega:T^+(\omega,x) > s\}} \sup_{ v \not=0} \lambda_v(s, \omega, x)\,.
\end{align*}
This implies that the function  $$t\mapsto  t  \sup_{x\in E}\esssup_{ \{\omega\in\Omega:T^+(\omega,x) > t\}} \sup_{ v \not=0}\lambda_v(t, \omega, x)$$ is subadditive.
Hence, we obtain
$$ \lim_{t \to \infty}  \sup_{x\in E}\esssup_{ \{\omega\in\Omega:T^+(\omega,x) > t\}} \sup_{ v \not=0} \lambda_v(t, \omega, x) = \limsup_{t \to \infty}  \sup_{x\in E}\esssup_{ \{\omega\in\Omega:T^+(\omega,x) > t\}} \sup_{ v \not=0} \lambda_v(t, \omega, x).$$
Provided $\sup\Sigma < \infty$, we show that
$$ \gamma := \limsup_{t \to \infty}  \sup_{x\in E}\esssup_{ \{\omega\in\Omega:T^+(\omega,x) > t\}} \sup_{ v \not=0}\lambda_v(t, \omega, x) = \sup \Sigma.$$
Since $\sup \Sigma < \infty$, there exists a $K \geq 1$ such that for almost all $\omega \in \Omega$ and all $x \in E$, $v  \in \mathbb{R}^d \setminus \{0\}$
\begin{equation} \label{sigmabound}
 \| \Phi(t,\omega,x)v\| \leq K e^{t \sup \Sigma} \|v\| \fa 0 \leq t < T^+(\omega,x).
\end{equation}
Assume for contradiction that $\gamma < \sup \Sigma$.
From the definition of $\gamma$, this means that there exists a $t_0 > 0$ such that for almost all $\omega \in \Omega$ and all $x \in E$ with $T^+(\omega,x) > t_0$ and all $v  \in \mathbb{R}^d \setminus \{0\}$,
\begin{equation*}
 \| \Phi(t,\omega,x) v\| \leq K \exp\big(\tfrac{1}{2}t (\gamma + \sup \Sigma)\big) \|v\| \fa t_0 \leq t <T^+(\omega,x)\,.
\end{equation*}
Hence, together with \eqref{sigmabound}, we obtain for almost all $\omega \in \Omega$ and all $x \in E$, $v  \in \mathbb{R}^d \setminus \{0\}$ that
\begin{equation*}
 \| \Phi(t,\omega,x) v\| \leq \hat{K} \exp\big(\tfrac{1}{2}t (\gamma + \sup \Sigma)\big) \|v\| \fa 0 \leq t <  T^+(\omega,x)\,,
\end{equation*}
where
$$ \hat{K} := \max \left\{1, K \exp\big(\tfrac{t_0}{2}(\sup \Sigma - \gamma)\big)\right\}. $$
By the definition of $\Sigma$, this implies that $\sup \Sigma \leq \frac{1}{2}(\gamma + \sup \Sigma)$, which is a contradiction. Hence, we have established that $\gamma \geq \sup \Sigma$.

Assume now that $\gamma > \sup \Sigma$, which implies $\sup \Sigma < \infty$. Hence, by the definition of the dichotomy spectrum, there exists a $K \geq 1$ such that for almost all $\omega \in \Omega$ and all $x \in E$, $v  \in \mathbb{R}^d \setminus \{0\}$,
\begin{equation*}
 \| \Phi(t,\omega,x) v\| \leq K \exp\big(\tfrac{1}{2}t (\gamma + \sup \Sigma)\big)\|v\| \fa 0 \leq t < T^+(\omega,x)\,.
\end{equation*}
On the other hand, this yields
$$\lambda_v(t, \omega,x) \leq \frac{\ln K}{t} + \frac{1}{2}(\gamma + \sup \Sigma)$$
for all $v \in \mathbb{R}^d \setminus \{0\}$ whenever $ t < T^+(\omega,x)$. Since $\frac{\ln K}{t} \to 0$ as $ t \to \infty$, we conclude that
$$ \gamma = \limsup_{t \to \infty}  \sup_{x\in E}\esssup_{ \{\omega\in\Omega:T^+(\omega,x) > t\}} \sup_{ v \not=0} \lambda_v(t, \omega, x) \leq \frac{1}{2} (\gamma + \sup \Sigma)\,, $$
which is again a contradiction. This proves the equality \eqref{supds}, and the second equality \eqref{supds2} follows analogously.
\end{proof}

\section*{Acknowledgments} The authors would like to thank Nils Berglund, Martin Hairer, Christian K{\"u}hn, Nikolas N{\"u}sken and Lai-Sang Young for very useful discussions. Maximilian Engel was supported by a Roth Scholarship from the Department of Mathematics at Imperial College London and the SFB Transregio 109 "Discretization in Geometry and Dynamcis" sponsored by the German Research Foundation (DFG). Jeroen S.W.~Lamb acknowledges the support by Nizhny Novgorod University through the grant RNF 14-41-00044, and Martin Rasmussen was supported by an EPSRC Career Acceleration Fellowship EP/I004165/1. This research has also been supported by EU Marie-Curie IRSES Brazilian-European Partnership in Dynamical Systems (FP7-PEOPLE-2012-IRSES 318999 BREUDS) and EU Marie-Sk\l odowska-Curie ITN Critical Transitions in Complex Systems (H2020-MSCA-2014-ITN 643073 CRITICS).




\end{document}